\documentclass[11pt]{article}

\usepackage[T1]{fontenc}
\usepackage{lmodern}
\usepackage{microtype}
\usepackage[a4paper,margin=29mm]{geometry}
\usepackage{amsmath,amssymb,amsthm,mathtools,mathrsfs}
\usepackage{enumitem}
\usepackage{needspace}
\usepackage{hyperref}
\usepackage[capitalise,noabbrev]{cleveref}
\usepackage{etoolbox}

\AtBeginEnvironment{proposition}{\crefalias{theorem}{proposition}}
\AtBeginEnvironment{lemma}{\crefalias{theorem}{lemma}}
\AtBeginEnvironment{corollary}{\crefalias{theorem}{corollary}}
\AtBeginEnvironment{definition}{\crefalias{theorem}{definition}}
\AtBeginEnvironment{hypothesis}{\crefalias{theorem}{hypothesis}}
\AtBeginEnvironment{remark}{\crefalias{theorem}{remark}}
\AtBeginEnvironment{example}{\crefalias{theorem}{example}}

\hypersetup{
  pdftitle={Hecke Orbits on Globally Drinfeld-Uniformized Shimura Varieties Are p-adically Nowhere Dense},
  pdfauthor={Yu Fu},
  colorlinks=false,
  linktoc=page,
  pdfborder={0 0 1},
  linkbordercolor={1 0 0},
  citebordercolor={0 1 0},
  urlbordercolor={0 0 1}
}

\setlist{itemsep=3pt,topsep=5pt}

\newtheorem{theorem}{Theorem}[section]
\newtheorem{proposition}[theorem]{Proposition}
\newtheorem{lemma}[theorem]{Lemma}
\newtheorem{corollary}[theorem]{Corollary}
\theoremstyle{definition}
\newtheorem{definition}[theorem]{Definition}

\newtheorem{remark}[theorem]{Remark}

\numberwithin{equation}{section}

\newcommand{\Cp}{\mathbb C_p}
\newcommand{\Qp}{\mathbb Q_p}
\newcommand{\Fpbar}{\overline{\mathbf F}_p}
\newcommand{\Adele}{\mathbf A}
\newcommand{\PP}{\mathbf P}
\newcommand{\HO}{\operatorname{HO}}
\newcommand{\Int}{\operatorname{Int}}
\newcommand{\red}{\operatorname{red}}
\newcommand{\an}{\mathrm{an}}
\newcommand{\Ber}{\mathrm{Ber}}

\newcommand{\GL}{\operatorname{GL}}
\newcommand{\PGL}{\operatorname{PGL}}
\newcommand{\PGU}{\operatorname{PGU}}
\newcommand{\Res}{\operatorname{Res}}

\title{Hecke Orbits on Globally Drinfeld-Uniformized Shimura Varieties Are $p$-adically Nowhere Dense}
\author{Yu Fu}
\date{}

\begin{document}
\maketitle
\makeatletter
\begingroup
\renewcommand{\thefootnote}{}
\long\def\@makefntext#1{\noindent#1}
\footnotetext[0]{\textit{Keywords.} Hecke correspondence, Shimura variety, Drinfeld upper half space, $p$-adic uniformization, nowhere density.\par
\textit{2020 Mathematics Subject Classification.} Primary 11G18, 14G35; Secondary 14G22, 14K10.}
\endgroup
\makeatother

\begin{abstract}
Fix a projective unitary Shimura datum satisfying Varshavsky's global Drinfeld uniformization hypotheses at a place $v\mid p$ of its reflex field $E$. Let $\mathcal{S}h$ be the base change of the Shimura variety to $\Cp$ at a sufficiently small neat finite level. We prove that for every closed reduced algebraic subvariety $Z\subset\mathcal{S}h$ with $\dim Z<\dim\mathcal{S}h$, the $p$-adic closure of its full finite adelic Hecke orbit has empty interior in $\mathcal{S}h(\Cp)$. At certain compatible levels, the nowhere density theorem also holds for Shimura varieties associated with a related PEL datum having the same adjoint datum as Varshavsky's.

Moreover, for every rational prime $p$ and every integer $d\geq2$, there is a projective unitary Shimura variety of dimension $d-1$ satisfying these uniformization hypotheses at a place above $p$ for which the nowhere density theorem holds.

\end{abstract}

\setcounter{tocdepth}{1}
\tableofcontents

\section{Introduction}

Hecke correspondences on a Shimura variety send algebraic subvarieties to algebraic subvarieties. Let $\mathcal{S}h$ be the base change of a Shimura variety at a fixed neat finite level to $\Cp$. For a closed algebraic subvariety $Z\subset\mathcal{S}h$, its full finite adelic Hecke orbit $\HO(Z)$ is the union of the $\Cp$-points of its images under all finite adelic Hecke correspondences. The finite level definition is given in \Cref{sec:definitions}.

A folklore conjecture predicts that if $\mathcal{S}h$ is a Shimura variety of Hodge type and $\dim Z<\dim\mathcal{S}h$, the $p$-adic closure of $\HO(Z)$ has empty interior in $\mathcal{S}h(\Cp)$; see \cite[Question~1.2 and Conjecture~1.3]{FuNowhereDensity}.

We prove the same nowhere density statement for Varshavsky's globally Drinfeld-uniformized projective unitary Shimura varieties and for the related PEL varieties described below.

Let $F$ be a totally real number field, let $K/F$ be a CM extension, and let $D^{\mathrm{int}}$ be a central simple algebra of degree $d\geq2$ over $K$ with an involution $\alpha^{\mathrm{int}}$ of the second kind. Assume that the associated unitary group has signature $(d-1,1)$ at $r\geq1$ real places and is definite at the others. We use the Shimura datum for $\mathbf H^{\mathrm{int}}=\Res_{F/\mathbb Q}\operatorname{GU}(D^{\mathrm{int}},\alpha^{\mathrm{int}})$ defined in \cite[\S2.2]{VarshavskyII}, and denote its reflex field by $E$.

Let $v$ be a finite place of $E$, let $p$ be its residue characteristic, and fix an embedding $E_v\hookrightarrow\Cp$. Let $w_1,\ldots,w_r$ be the places of $K$ above $p$ determined by the chosen Shimura datum and the reflex place $v$, as in \cite[\S2.5]{VarshavskyII}, and put $v_i=w_i|_F$. We impose the following conditions on $v$: the $v_i$ are distinct, each $v_i$ splits in $K$, and $\operatorname{inv}_{w_i}D^{\mathrm{int}}=1/d$ for every $i$. Under these hypotheses, Varshavsky's theorem gives the global Drinfeld uniformization recalled in \Cref{sec:varshavsky}.

\medskip
\begin{theorem}
\label{thm:main-intro}
For the Shimura datum and the place $v$ satisfying the hypotheses above, let $\mathcal{S}h$ be the base change to $\Cp$ of the entire Shimura variety at any sufficiently small neat finite level. Then $\dim\mathcal{S}h=r(d-1)$. For every closed reduced algebraic subvariety $Z\subset\mathcal{S}h$ with $\dim Z<\dim\mathcal{S}h$, the $p$-adic closure of its full finite adelic Hecke orbit has empty interior in $\mathcal{S}h(\Cp)$.
\end{theorem}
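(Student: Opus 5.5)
The plan is to move the problem to the Drinfeld upper half space through Varshavsky's global uniformization (\Cref{sec:varshavsky}). After base change to $\Cp$, the rigid analytification $\mathcal{S}h^{\an}$ is a finite disjoint union of quotients $\Gamma_j\backslash(\Omega^{d-1})^r_{\Cp}$. Here $\Omega^{d-1}$ is Drinfeld's upper half space, and the $\Gamma_j$ are cocompact discrete torsion-free subgroups of $\prod_i \PGL_d(F_{v_i})$ arising from the inner form of $\mathbf H^{\mathrm{int}}$ with the invariants interchanged. The dimension claim $\dim\mathcal{S}h=r(d-1)$ follows at once, since each factor $\Omega^{d-1}$ has dimension $d-1$. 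The key structural point is that, in the uniformization, the prime-to-$p$ finite adelic Hecke correspondences, together with the correspondences at $p$ that come from the group $\mathbf H(\Adele_f)$, lift on the universal cover $(\Omega^{d-1})^r$ to translates by elements of a countable group $\Gamma_\infty$. This group is the $S$-arithmetic (in fact $\{p\}\cup$ finite-adelic) group of the twisted form, and it acts through $\prod_i\PGL_d(F_{v_i})$ and by permuting components. It follows that $\HO(Z)$ lifts to $\bigcup_{\gamma\in\Gamma_\infty}\gamma\tilde Z$, where $\tilde Z$ is the preimage of $Z$ in $\coprod(\Omega^{d-1})^r$.

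For nowhere density, a Baire category argument is not enough, because the closure of a countable union may have interior. I would instead find a \emph{local obstruction}: a nonempty open subset $U$ of $\mathcal{S}h(\Cp)$ that the closure misses. The natural tool is the reduction map $\red\colon(\Omega^{d-1})^r\to\mathcal B$ to the product of Bruhat--Tits buildings. It is $\prod_i\PGL_d(F_{v_i})$-equivariant, so every translate $\gamma\tilde Z$ has reduction $\gamma\,\red(\tilde Z)$. Since $\dim Z<\dim\mathcal{S}h$, the analytic subvariety $\tilde Z$ cannot contain the full tube over an open vertex cell. I would check this in the formal model, where the special fiber over a vertex $s$ is an open subset of an irreducible component $X_s$ of dimension $r(d-1)$, and the closure of $Z$ in the Kurihara--Mustafin formal model meets it in something of smaller dimension. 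Consequently there is a point $\bar x$ in the smooth locus of some component $X_s$ that avoids the specialization of $Z$, and the residue tube $]\bar x[$ is an open polydisc disjoint from $\tilde Z$.

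The main obstacle is uniformity over the infinitely many translates. Tubes are open and closed, but $\gamma\tilde Z$ might meet $]\bar x[$ for some other $\gamma$, and the union over all $\gamma$ could become dense inside it. To control this, I would use that $\red(\tilde Z)$ specializes in the special fiber, which is finite type modulo $\Gamma_j$, to finitely many closed subvarieties $W_s\subsetneq X_s$, one for each $\Gamma_j$-orbit of vertex. Since $\Gamma_\infty$ acts on the building by automorphisms, each translate $\gamma\tilde Z$ specializes into $\gamma W_s$, which is a proper closed subvariety of $X_{\gamma s}$. The special fiber components are all isomorphic to a fixed variety: a blow-up of $\PP^{d-1}_{\Fpbar}$ along all rational linear subspaces, raised to the $r$-th power. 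The stabilizer of a vertex acts on its component through the finite group $\PGL_d(\mathbf F_q)$. So I need the union of countably many proper subvarieties $\gamma W_s$, one for each element of $\mathrm{Stab}(s)\cap\Gamma_\infty$, to miss some $\Fpbar$-point. This is the delicate step: $\Fpbar$ is countable, so one cannot conclude by counting points.

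I would try to get around this by using the finite-group reduction. The action of the stabilizer factors through the finite group $\PGL_d(\mathbf F_q)^r$, together with the reduction of the stabilizer modulo its pro-$p$ radical. The subgroup of $\mathrm{Stab}(s)$ that acts trivially on the reduction does move points inside tubes, but it does not change their specialization. Hence only finitely many distinct subvarieties $\gamma W_s$ occur in each component. Their union is a proper closed subset $W$, and any point $\bar x\notin W$ gives a tube $]\bar x[$ that misses every translate, even after passing to the closure, since tubes are open. Pushing $]\bar x[$ down to $\mathcal{S}h(\Cp)$ produces a nonempty open set disjoint from $\overline{\HO(Z)}$. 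Finally, because the closure of a union of Hecke images is Hecke-stable, the same argument applies at every neat level, and empty interior follows by a translation argument.
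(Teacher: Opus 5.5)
Your proposal has a genuine gap in its final step. The vertex-level reduction gives one nonempty open set disjoint from $\overline{\HO(Z)}^{\,\mathrm{cl}}$, namely the image of the good residue tubes $]\bar x[$ with $\bar x\notin W$. This only shows that $\HO(Z)$ is not dense. Empty interior requires that \emph{every} small ball around \emph{every} point contain points outside the closure, and the proposed translation argument cannot give this.

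By your own construction, the bad locus $W$ (the union over all vertices of the specializations of the translates $\gamma\tilde Z$) is $\Gamma_\infty$-stable. Every Hecke operator acts on periods through $\Gamma_\infty$, hence through building automorphisms. So translates of a small ball inside a bad residue class $]\bar w[$ with $\bar w\in W$ never reach a good residue class. The same holds for a ball in the tube over the interior of a positive-dimensional simplex, which contains no vertex tube at all; for $d=2$ an example is the annulus $\{|\varpi|<|x|<1\}$. Inside $]\bar w[$ infinitely many translates $\gamma\tilde Z$ reduce to $\bar w$. Since residue tubes are open and closed, nothing in your argument prevents their union from being dense in $]\bar w[$.

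What is missing is your reduction argument at every scale, which is the content of the local theorem \Cref{thm:local-drinfeld} that the paper uses. For an arbitrary small strict closed polydisc $V\subset\mathcal D$, not only a vertex affinoid $\red^{-1}(s)$, the union of all translates meeting $V$ must reduce into a proper Zariski closed subset, as in \Cref{prop:uniform-period-reduction}. The paper gets this as follows:
\begin{enumerate}
\item Compactness of $Z^{\Ber}$ gives finitely many affinoid lifts $A_i$ whose $\Gamma_c$-translates cover the preimage of $Z$ (\Cref{lem:finite-qc-lifts}).
\item The local theorem is applied to these $A_i$ and the whole group $J$. Properness of the $J$-action on $\mathcal D^{\Ber}$ makes $\{g\in J: g\,\pi_c(A_i)\cap V\neq\varnothing\}$ compact.
\item This compact set is covered by finitely many cosets $J_Vg_k$ of an open subgroup $J_V$ that preserves $V$ and $\red_V$. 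So only finitely many sets $g_k\pi_c(A_i)\cap V$ matter, and finite reduction applies to them.
\end{enumerate}
Your observation that vertex stabilizers act through finite groups is the special case in which $V$ is a vertex affinoid. The properness and compactness step is absent from your proposal, and without it the argument says nothing at smaller scales.

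A minor point: at levels deeper than $\prod_i\mathcal O_{\widetilde D_i}^\times$ at $p$, the components are finite \'etale covers of $\Gamma_c\backslash\mathcal D$, not quotients of $\mathcal D$. The paper handles this with the period maps $\pi_c$, and this issue is repairable.
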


The theorem applies to every closed reduced subvariety of positive codimension. Such a subvariety may be Hodge generic or zero-dimensional. 
The adjoint group of these Shimura data is $\mathbb Q$-simple; see \Cref{rmk:Q-simple}. Thus even when the period domain is a product, the theorem is not obtained from a product decomposition of the adjoint Shimura datum.




\medskip
For a fixed Shimura datum, the hypotheses of \Cref{thm:main-intro} restrict the choice of $v$. Nevertheless, for every rational prime $p$ and every integer $d\geq2$, one can choose a Varshavsky datum of degree $d$ satisfying these hypotheses at a place above $p$; see \Cref{prop:one-place-existence}.

\medskip
Varshavsky's original Shimura data are not of Hodge type. In \Cref{subsec:pel-central-lift}, we construct a PEL Shimura datum $(\mathbf H^\sharp,\mathcal X^\sharp)$ with the same adjoint datum and  prove the following consequence of \Cref{thm:main-intro} for its Shimura varieties at the compatible levels of \Cref{lem:pel-central-lift-finite}.

\begin{corollary}
\label{cor:pel-intro}
Let $S^\sharp=\operatorname{Sh}_{\mathcal U^\sharp}(\mathbf H^\sharp,\mathcal X^\sharp)_{\Cp}$, where $\mathcal U^\sharp$ is a sufficiently small neat compact open subgroup satisfying the compatibility condition in \Cref{lem:pel-central-lift-finite}. For every closed reduced algebraic subvariety $Z^\sharp\subset S^\sharp$ with $\dim Z^\sharp<\dim S^\sharp$, the $p$-adic closure of its full finite adelic Hecke orbit is nowhere dense in $S^\sharp(\Cp)$.
\end{corollary}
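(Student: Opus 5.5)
The plan is to deduce \Cref{cor:pel-intro} from \Cref{thm:main-intro} by comparing the two Shimura varieties through their common adjoint datum. At the compatible levels of \Cref{lem:pel-central-lift-finite}, I expect that lemma to supply a Shimura variety $\operatorname{Sh}_{\mathcal U}(\mathbf H^{\mathrm{int}},\mathcal X)_{\Cp}=\mathcal{S}h$ (or a common cover) together with finite étale morphisms relating each connected component of $S^\sharp$ to a connected component of $\mathcal{S}h$. Concretely, both are disjoint unions of quotients $\Gamma\backslash \mathcal X^+$ with arithmetic groups whose images in $\mathbf H^{\mathrm{ad}}(\mathbb Q)$ are commensurable. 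After shrinking, I would fix a single neat level $\mathcal U'$ of the adjoint-type comparison so that each component $S^\sharp_j$ and $\mathcal{S}h_i$ receives a finite étale map from a common variety $T$. Such maps are finite, hence closed, and open on $\Cp$-points in the $p$-adic topology. This gives two topological facts: preimages of nowhere dense sets are nowhere dense, and images of nowhere dense closed sets under finite maps are nowhere dense.

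The key step is compatibility of Hecke orbits. The finite adelic Hecke action of $\mathbf H^\sharp(\Adele_f)$ on components factors, modulo the action of the center, through the action of $\mathbf H^{\mathrm{ad}}(\Adele_f)$-elements that normalize the relevant level structures. I would therefore show the following: for $Z^\sharp\subset S^\sharp$, take $\pi_\sharp\colon T\to S^\sharp$ and $\pi\colon T\to\mathcal{S}h$, and put $Z=\pi(\pi_\sharp^{-1}(Z^\sharp))$. This $Z$ is closed, reduced, and of the same dimension, since finite maps preserve dimension and $\dim S^\sharp=\dim\mathcal{S}h$. The claim is then
\[
\pi_\sharp^{-1}\bigl(\HO(Z^\sharp)\bigr)\subseteq \pi^{-1}\bigl(\HO(Z)\bigr).
\]
This should come from identifying each Hecke correspondence on $S^\sharp$ with one on $\mathcal{S}h$ along the common connected uniformization $\Gamma\backslash\mathcal X^+$. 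In the worst case one must enlarge to the $\mathbf H^{\mathrm{ad}}(\mathbb Q)^+$-orbit. If the \emph{full} adjoint Hecke action is larger than the $\mathbf H^{\mathrm{int}}(\Adele_f)$ one, I would cover the discrepancy by a finite union of translates. The group $\mathbf H^{\mathrm{ad}}(\Adele_f)$ modulo the image of $\mathbf H^{\mathrm{int}}(\Adele_f)$ acts through a finite abelian group on components, coming from finiteness of class groups. So the extra orbit is a finite union of Hecke orbits of finitely many subvarieties $Z_1,\dots,Z_m$ of $\mathcal{S}h$, each of positive codimension.

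To conclude, \Cref{thm:main-intro} applied to each $Z_k$ shows that the closure of $\HO(Z_k)$ is nowhere dense. A finite union of closed nowhere dense sets is nowhere dense. Pulling back by $\pi$ and pushing forward by $\pi_\sharp$ keeps it nowhere dense, because $\pi_\sharp$ is finite and surjective and so is closed on $\Cp$-points. This set contains $\HO(Z^\sharp)$, so its closure does as well.

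I expect the main obstacle to be the Hecke-compatibility step. One must check that every element of $\mathbf H^\sharp(\Adele_f)$ induces, on the common connected covers, a correspondence realizable up to finitely many fixed translates by $\mathbf H^{\mathrm{int}}(\Adele_f)$-Hecke operators. This amounts to controlling the cokernel of $\mathbf H^\sharp(\Adele_f)\to\mathbf H^{\mathrm{ad}}(\Adele_f)/\operatorname{im}\mathbf H^{\mathrm{int}}(\Adele_f)$ modulo the rational points and the levels. I would first try to reduce this to the statement in \Cref{lem:pel-central-lift-finite}, which presumably asserts exactly such a finiteness at compatible levels.
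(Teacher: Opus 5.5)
There is a genuine gap, and it sits at the step you flag as the main obstacle. The inclusion $\pi_\sharp^{-1}(\HO(Z^\sharp))\subseteq\pi^{-1}(\HO(Z))$ is the entire content of the corollary, and you only assert it. The mechanism you sketch does not supply it, for three reasons.
\begin{enumerate}
\item Comparing connected components through $(\mathbf H^{\mathrm{int}})^{\mathrm{ad}}$ gives no rule that turns an $\mathbf H^\sharp(\Adele_f)$-Hecke correspondence on $S^\sharp$ into an $\mathbf H^{\mathrm{int}}(\Adele_f)$-Hecke correspondence on $\mathcal{S}h$ compatibly with $\pi_\sharp$ and $\pi$. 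Hecke correspondences also permute components, and $T$, built from adjoint connected quotients, does not carry the full finite adelic correspondences of either group.
\item The full group $(\mathbf H^{\mathrm{int}})^{\mathrm{ad}}(\Adele_f)$ does not act on the $\mathbf H^{\mathrm{int}}$-tower in general. So ``covering the discrepancy by finitely many translates $Z_1,\dots,Z_m$'' has no precise meaning.
\item \Cref{lem:pel-central-lift-finite} asserts no finiteness of any cokernel.
\end{enumerate}
Constructing $T$ over $\Cp$ compatibly with both canonical models would also require a descent argument you do not give.

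The missing idea is to use the morphism of Shimura data $q:\mathbf H^\sharp\to\mathbf H^{\mathrm{int}}$, $q(t,g)=t^{-1}g$, from \Cref{prop:pel-datum} directly, with no adjoint group and no common cover. By \Cref{lem:pel-central-lift-finite}, when $q(\mathcal U^\sharp)\subset\mathcal U$, the map $q$ induces a finite \'etale morphism $f:S^\sharp\to\mathcal{S}h$. For every $a\in\mathbf H^\sharp(\Adele_f)$ one also has $q(\mathcal U^\sharp\cap a\mathcal U^\sharp a^{-1})\subset\mathcal U\cap q(a)\mathcal U q(a)^{-1}$. Hence $q$ induces $f_a$ between the intermediate levels, with $s_{q(a)}\circ f_a=f\circ s^\sharp_a$ and $t_{q(a)}\circ f_a=f\circ t^\sharp_a$.

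Put $Z_0=(f(Z^\sharp))_{\mathrm{red}}$, which is closed of dimension $\dim Z^\sharp<\dim\mathcal{S}h$. Then $f(\HO(Z^\sharp))\subseteq\HO(Z_0)$: each $\mathbf H^\sharp$-Hecke correspondence maps into the one attached to $q(a)$, so the comparison runs in the right direction automatically. It follows that $\overline{\HO(Z^\sharp)}\subseteq f^{-1}(\overline{\HO(Z_0)})$. The right-hand side has empty interior by \Cref{thm:varshavsky-main}, because the \'etale map $f$ is open on $\Cp$-points.

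Relatedly, in your final step, closedness of $\pi_\sharp$ alone does not make the image of a nowhere dense set nowhere dense. You would also need $\pi_\sharp$ to be a local homeomorphism on $\Cp$-points, which follows from \'etaleness.
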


The construction also shows that Varshavsky's data are of abelian type; see \Cref{prop:pel-datum}.

\paragraph{Strategy of the proof.}
Under Varshavsky's hypotheses at a finite place, every geometric component of the fixed level projective Shimura variety is a finite \'etale cover of an arithmetic quotient of a product of Drinfeld upper half spaces \cite[Proposition~2.8, Lemma~2.9, Remarks~2.11--2.12, and First Main Theorem~2.13]{VarshavskyII}.  The fact that this description covers every component at the fixed level is essential.

The analytic input is \Cref{thm:local-drinfeld} which is proved in \cite[Theorem~4.1]{FuLargeOrbits}. For a product $\mathcal D$ of Drinfeld upper half spaces and its associated group $J$, if $A$ is a reduced quasi-compact separated strict $\Cp$-analytic space and $f:A\to\mathcal D$ is analytic with $\dim A<\dim\mathcal D$, then $Jf(A(\Cp))$ is closed and nowhere dense in $\mathcal D(\Cp)$. The precise statement and notation are recalled in \Cref{sec:drinfeld}.

We combine Varshavsky's uniformization with the local theorem as follows. Since $\mathcal{S}h$ is projective and $Z$ is closed, $Z^{\Ber}$ is compact. We therefore choose finitely many quasi-compact analytic subspaces in the inverse image of $Z$ under the uniformization whose arithmetic translates cover that inverse image. These analytic subspaces are chosen independently of the Hecke correspondence. Varshavsky's full finite adelic equivariance shows that the period image of every lift of a point of $\HO(Z)$ lies in a $J$-translate of the period image of one of these subspaces, while the local theorem makes the union of these $J$-translates closed and nowhere dense. We pull this union back along the finite \'etale period maps and descend it through the open arithmetic quotient maps. Thus the argument controls all Hecke correspondences simultaneously, rather than treating them one at a time and taking a countable union. We formulate this argument as a general criterion in \Cref{sec:global-criterion} and verify its period containment hypothesis for all finite adelic Hecke correspondences in \Cref{prop:full-period containment}.

The criterion in \Cref{thm:global-criterion} is stated for more general period domains and only requires the period maps to be open on $\Cp$-points. A local nowhere density statement for orbit closures and the full period containment property remain hypotheses. \Cref{prop:uniform-period-reduction} gives an alternative to compactness by imposing a reduction condition that controls the periods of all lifts at once. 

\paragraph{Related work.}
In the proper good-reduction setting discussed in \Cref{subsec:maulik-poonen}, Maulik--Poonen's theorem \cite{MaulikPoonen} gives related nowhere density statements for points and for positive-dimensional subvarieties with generically extra rational endomorphisms.  By contrast, our theorem imposes no such condition on $Z$: it applies to every closed reduced subvariety of positive codimension including Hodge-generic ones under Varshavsky's global uniformization hypotheses.

\paragraph{Organization of the paper.}
\Cref{sec:definitions} fixes the topology and Hecke notation. \Cref{sec:global-criterion} proves the global period containment criterion. \Cref{sec:varshavsky} recalls the local Drinfeld theorem, verifies the criterion in Varshavsky's family, and proves the main theorem, first at product levels and then at arbitrary sufficiently small neat levels.  \Cref{subsec:pel-central-lift} constructs the related PEL datum and proves its Hecke-orbit consequence at compatible levels.  \Cref{sec:scope} compares our result with Maulik--Poonen's theorem and discusses Kim's uniformization and the difficulties in extending our argument to general Shimura varieties.

\section{Preliminaries}
\label{sec:definitions}

All closures in the statements are taken in the $p$-adic topology on $\Cp$-points. For a smooth $\Cp$-analytic space $X$ and a subset $S\subset X(\Cp)$, we write $\overline S^{\,\mathrm{cl}}$ for this closure. We say that $S$ is \emph{nowhere dense} in the $p$-adic topology if $\Int_{X(\Cp)}\overline S^{\,\mathrm{cl}}=\varnothing$. We introduce Berkovich spaces for compactness and analytic coverings.

Let $\mathbf H$ be the reductive $\mathbb Q$-group in a Shimura datum, and fix an embedding of its reflex field into $\Cp$. For a neat compact open subgroup $K\subset\mathbf H(\Adele_f)$, let $\mathcal{S}h_K$ denote the base change to $\Cp$ of the associated Shimura variety at level $K$. For $g\in\mathbf H(\Adele_f)$, let $K_g=K\cap gKg^{-1}$.

\begin{definition}
The \emph{finite adelic Hecke correspondence associated with $g$} is the diagram
\[
\mathcal{S}h_K
\xleftarrow{\ p_{1,g}\ }
\mathcal{S}h_{K_g}
\xrightarrow{\ p_{2,g}\ }
\mathcal{S}h_K,
\]
where $p_{1,g}$ is induced by $K_g\subset K$, and $p_{2,g}$ is induced by right translation by $g$ from level $K_g$ to level $g^{-1}K_g g$, followed by the change of level map for $g^{-1}K_g g\subset K$. Since $K$ is neat, both maps are finite \'etale.

For a closed subvariety $Z\subset\mathcal{S}h_K$, let $T_g(Z)\subset\mathcal{S}h_K$ be the reduced scheme-theoretic image of $p_{1,g}^{-1}(Z)$ under $p_{2,g}$. The \emph{full finite adelic Hecke orbit of $Z$ at level $K$} is $\HO_K(Z)=\bigcup_{g\in\mathbf H(\Adele_f)}T_g(Z)(\Cp)$.
\end{definition}

When the level is fixed, we write $\mathcal{S}h$ and $\HO(Z)$. The word \emph{full} means that $g$ ranges over all of $\mathbf H(\Adele_f)$; in particular, we also allow Hecke correspondences at $p$. These correspondences come from the standard right action on the Shimura tower; see \cite[Remark~5.29 and Theorem~13.6]{MilneShimura}.

A strict $\Cp$-analytic space admits an affinoid covering by spectra of strict $\Cp$-affinoid algebras. A quasi-compact separated strict analytic space has compact Berkovich space; see \cite[\S1.2]{BerkovichSpectral} for compactness of affinoid Berkovich spectra. A finite flat analytic map is open \cite[Proposition~3.2.7]{BerkovichEtale}. More generally, an étale map between smooth $\Cp$-analytic spaces induces a local analytic isomorphism on $\Cp$-points, by the nonarchimedean inverse function theorem, and is therefore open in the $p$-adic topology. By an analytic covering we mean a surjective analytic map such that every point of the target Berkovich space has an open neighborhood whose inverse image is a disjoint union of open analytic domains mapping isomorphically to that neighborhood.

\section{Local input and a global period containment criterion}
\label{sec:global-criterion}

We combine the local theorem with descent through period maps and arithmetic quotients. When $Z^{\Ber}$ is compact, finitely many quasi-compact lifts give one closed nowhere dense set containing the periods of all lifted targets. At the end of the section, we give another sufficient condition that does not require compactness.

\subsection{The global setup and period containment}
\label{subsec:criterion-setup}

We first give a general criterion, which we will apply to Varshavsky's Shimura varieties.

Let $\mathcal D$ be a smooth separated strict $\Cp$-analytic space of pure dimension $n\geq1$, and let $J$ be a topological group acting on $\mathcal D$ by analytic automorphisms. Let $C$ be a finite set. For every $c\in C$, assume that we have
\begin{enumerate}[label=\textup{(\arabic*)}]
\item a smooth separated strict $n$-dimensional $\Cp$-analytic space $\mathcal M_c$, a discrete torsion-free subgroup $\Gamma_c\subset J$ acting freely and properly discontinuously on $\mathcal M_c$, and the geometric analytic quotient $q_c:\mathcal M_c\to X_c=\Gamma_c\backslash\mathcal M_c$;
\item an analytic map $\pi_c:\mathcal M_c\to\mathcal D$ whose induced map on $\Cp$-points is open in the $p$-adic topology, satisfying
\begin{equation}
\label{eq:criterion-equivariance}
 \pi_c(\gamma m)=\gamma\pi_c(m)
\end{equation}
for \(\gamma\in\Gamma_c\);
\item an identification of an open analytic subspace of the analytification of a finite level Shimura variety with $\coprod_{c\in C}X_c$.
\end{enumerate}
The maps $q_c$ are surjective analytic coverings whose fibers on Berkovich spaces are $\Gamma_c$-orbits. They induce continuous open surjections on $\Cp$-points.

For \Cref{thm:global-criterion}, we assume the following local hypothesis: \textbf{for every reduced strict affinoid $\Cp$-analytic space $A$ with $\dim A<n$ and every analytic map $f:A\to\mathcal D$, the $p$-adic closure of $Jf(A(\Cp))$ is nowhere dense in $\mathcal D(\Cp)$.}

\begin{definition}
\label{def:period containment}
Let $Z\subset\coprod_cX_c$ be a closed reduced analytic subspace, and let $W_c=q_c^{-1}(Z\cap X_c)$. A \emph{family of finite correspondences} on $\coprod_cX_c$ means a family of diagrams
\[
\coprod_{c\in C}X_c\xleftarrow{\ s_h\ }Y_h\xrightarrow{\ t_h\ }\coprod_{c\in C}X_c
\]
with $s_h$ and $t_h$ finite. It has the \emph{full period containment property for $Z$} if the following pointwise condition holds. For every $h$, every $y\in Y_h(\Cp)$ with $s_h(y)\in Z$, and every lift $m'\in\mathcal M_{c'}(\Cp)$ of $t_h(y)$, there are an index $c$, a point $m\in W_c(\Cp)$, and an element $j\in J$ such that
\begin{equation}
\label{eq:abstract-period containment}
 \pi_{c'}(m')=j\pi_c(m).
\end{equation}
 The element $j$ may depend on $h$, $y$, and the chosen lift $m'$. Thus the period of every lifted target point lies in $\bigcup_{c\in C}J\pi_c(W_c(\Cp))$.
\end{definition}

\begin{theorem}
\label{thm:global-criterion}
Assume \textup{(1)--(3)} and the local hypothesis in \Cref{subsec:criterion-setup}. Let $Z\subset\coprod_cX_c$ be a closed reduced analytic subspace such that $Z^{\Ber}$ is compact, and suppose that $\dim Z<n$. Let $\mathscr H$ be a family of finite correspondences with the full period containment property for $Z$. Then the $p$-adic closure of the set of all $t_h(y)$ with $h\in\mathscr H$, $y\in Y_h(\Cp)$, and $s_h(y)\in Z$ is nowhere dense in $\coprod_cX_c(\Cp)$.
\end{theorem}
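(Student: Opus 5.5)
The plan is to produce one closed nowhere dense subset of $\mathcal D(\Cp)$ that contains the periods of all lifted target points, pull it back along the open maps $\pi_c$, and push it down along the open maps $q_c$.

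\emph{Step 1: finitely many quasi-compact lifts.} Since $Z^{\Ber}$ is compact and each $q_c$ is an analytic covering, every point of $(Z\cap X_c)^{\Ber}$ has an open neighbourhood $U$ over which $q_c^{-1}(U)$ splits as a disjoint union of copies of $U$. Shrink these neighbourhoods to affinoid domains. By compactness, finitely many reduced strict affinoids $A_{c,1},\dots,A_{c,k_c}$ cover $Z\cap X_c$, and each lifts isomorphically to an affinoid $\widetilde A_{c,i}\subset W_c$. Every point of $W_c(\Cp)$ is a $\Gamma_c$-translate of a point of some $\widetilde A_{c,i}(\Cp)$. We have $\dim\widetilde A_{c,i}\le\dim Z<n$, and the $\widetilde A_{c,i}$ are reduced because $Z$ is.

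\emph{Step 2: the closed nowhere dense set.} By \eqref{eq:criterion-equivariance} and $\Gamma_c\subset J$,
\[
J\pi_c(W_c(\Cp))=\bigcup_i J\pi_c(\widetilde A_{c,i}(\Cp)).
\]
Put $N=\bigcup_{c,i}\overline{J\pi_c(\widetilde A_{c,i}(\Cp))}^{\,\mathrm{cl}}$. The local hypothesis, applied to $f=\pi_c|_{\widetilde A_{c,i}}$, shows that each term is closed and nowhere dense. A finite union of closed nowhere dense sets is closed and nowhere dense, so $N$ is closed nowhere dense in $\mathcal D(\Cp)$. Moreover $N$ is $J$-stable, since each $J\pi_c(\cdots)$ is $J$-stable and $J$ acts by homeomorphisms.

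\emph{Step 3: pull back and descend.} Set $N'_{c'}=\pi_{c'}^{-1}(N)\subset\mathcal M_{c'}(\Cp)$. This set is closed by continuity. It has empty interior: if $V\subset N'_{c'}$ were nonempty open, then $\pi_{c'}(V)$ would be a nonempty open subset of $N$, since $\pi_{c'}$ is open on $\Cp$-points, which is impossible. The set $N'_{c'}$ is $\Gamma_{c'}$-stable by equivariance and $J$-stability of $N$. Let $B_{c'}=q_{c'}(N'_{c'})$. Its complement is $q_{c'}$ of the complement of $N'_{c'}$, because $N'_{c'}$ is saturated. Since $q_{c'}$ is open, that complement is open, so $B_{c'}$ is closed. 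If $B_{c'}$ contained a nonempty open $O$, then $q_{c'}^{-1}(O)\subset N'_{c'}$ would be nonempty and open, a contradiction. So $B=\coprod B_{c'}$ is closed and nowhere dense. By the full period containment property, every target point $t_h(y)$ has a lift $m'$ with $\pi_{c'}(m')\in N$, so $t_h(y)\in B$. Hence the closure of the Hecke image set lies in $B$ and has empty interior.

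\emph{Main obstacle.} The delicate step is Step 1. We must ensure the lifted affinoids are reduced strict affinoids of dimension $<n$, so that the local hypothesis applies. We must also ensure the covering is genuinely finite, using compactness of $Z^{\Ber}$ together with the local triviality of $q_c$ on Berkovich spaces, and that the $\Cp$-points of the lifts together with their $\Gamma_c$-translates exhaust $W_c(\Cp)$. The first attempt will be to take affinoid neighbourhoods inside trivializing opens in $X_c$, intersect them with $Z$ (a Zariski closed subspace of an affinoid, hence affinoid and reduced), and transport them by the local section of $q_c$.
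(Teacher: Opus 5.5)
Your proposal is correct and is essentially the paper's proof. Step~1 is \Cref{lem:finite-qc-lifts} (strict affinoid neighbourhoods inside evenly covered opens, intersected with $Z$ and transported along a chosen sheet), Step~2 is the finite $J$-invariant union $B=\bigcup_iB_i$ obtained from the local hypothesis, and Step~3 is \Cref{lem:pullback-quotient} combined with the full period containment property; the only details you leave implicit, both handled in the paper, are that $(Z\cap X_c)^{\Ber}$ is compact as a closed subset of $Z^{\Ber}$ and that each affinoid $V_z$ must contain $z$ in the interior of $V_z^{\Ber}$ so that compactness yields a finite subcover.
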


\subsection{Proof of the criterion}

We first choose the finite quasi-compact lifts and then record how a closed nowhere dense $J$-invariant set descends through the period maps and arithmetic quotients.

\begin{lemma}
\label{lem:finite-qc-lifts}
Fix $c\in C$, and let $Z\subset X_c$ be a closed reduced analytic subspace such that $Z^{\Ber}$ is compact. Let $s=\dim Z$. Then there are finitely many reduced quasi-compact strict analytic subspaces $A_1,\ldots,A_t\subset q_c^{-1}(Z)$ such that \(\dim A_i\leq s,\) and  \(q_c^{-1}(Z)\subseteq \Gamma_c\cdot\bigcup_{i=1}^tA_i.\)

\end{lemma}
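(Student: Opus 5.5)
The plan is to use compactness of $Z^{\Ber}$ together with the fact that $q_c$ is an analytic covering, as recorded after (1)--(3). For each point $x\in Z^{\Ber}\subset X_c^{\Ber}$, the covering property gives an open neighborhood $U_x$ of $x$ in $X_c^{\Ber}$ with
\[
q_c^{-1}(U_x)=\coprod_{\gamma\in\Gamma_c}\gamma\widetilde U_x ,
\]
where $\widetilde U_x\subset\mathcal M_c$ is an open analytic domain mapping isomorphically onto $U_x$ and the sheets are permuted simply transitively by $\Gamma_c$, since the fibers are $\Gamma_c$-orbits and the action is free. Shrinking, I will take a strict affinoid neighborhood $V_x\subset U_x$ of $x$ whose topological interior contains $x$. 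This is possible because $X_c$ is a good strict analytic space near $x$: it is locally isomorphic to $\mathcal M_c$, which is smooth and strict. Compactness of $Z^{\Ber}$ then gives finitely many points $x_1,\ldots,x_t$ such that the interiors of the $V_{x_i}$ cover $Z^{\Ber}$.

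Next, let $\widetilde V_i\subset\widetilde U_{x_i}$ be the isomorphic preimage of $V_{x_i}$; it is a strict affinoid domain in $\mathcal M_c$. Put $A_i=q_c^{-1}(Z)\cap\widetilde V_i$, taken with its reduced structure. Because $q_c|_{\widetilde V_i}:\widetilde V_i\to V_{x_i}$ is an isomorphism carrying $A_i$ onto $Z\cap V_{x_i}$, the space $A_i$ is a closed analytic subspace of an affinoid. It is therefore a reduced strict affinoid space, in particular quasi-compact and separated, and $\dim A_i=\dim(Z\cap V_{x_i})\le s$. For the covering statement, take $m\in q_c^{-1}(Z)$. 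Then $q_c(m)$ lies in some $V_{x_i}\subset U_{x_i}$, so $m$ lies in the sheet $\gamma\widetilde U_{x_i}$ for some $\gamma\in\Gamma_c$. Hence $\gamma^{-1}m\in\widetilde V_i\cap q_c^{-1}(Z)=A_i$, because $q_c^{-1}(Z)$ is $\Gamma_c$-stable. This gives $q_c^{-1}(Z)\subseteq\Gamma_c\cdot\bigcup_iA_i$, both on Berkovich points and on $\Cp$-points.

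The step I expect to need the most care is the strictness and reducedness bookkeeping. First, one must check that the local sections $\widetilde U_x$ can be shrunk to strict affinoid domains, which uses that strict affinoid domains form a basis of neighborhoods in a good strict $\Cp$-analytic space. Second, one must check that the reduced structure on $q_c^{-1}(Z)\cap\widetilde V_i$ agrees with the pullback of the reduced structure on $Z\cap V_{x_i}$. This holds because $q_c$ is étale, indeed a local isomorphism, so reducedness is preserved.
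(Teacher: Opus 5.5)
Your proposal is correct and follows essentially the same route as the paper: you take evenly covered neighborhoods, shrink them to strict affinoids, use compactness of $Z^{\Ber}$, set $A_i=q_c^{-1}(Z)\cap\widetilde V_i\cong Z\cap V_i$, and get the covering statement from the fact that fibers of $q_c$ are $\Gamma_c$-orbits. The differences are cosmetic. Your sheet-translation step is harmless once $U_x$ is taken connected, but the paper avoids it by picking $a\in\widetilde V_i$ with $q_c(a)=q_c(m)$ and writing $m=\gamma a$. Also, the paper gets strict affinoid neighborhoods from hypothesis (3), that $X_c$ is open in an algebraic analytification; your appeal to smoothness of $\mathcal M_c$ gives goodness only if smoothness is meant in Berkovich's boundaryless sense.
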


\begin{proof}
For every $z\in Z^{\Ber}$, choose an open neighborhood $U_z\subset X_c^{\Ber}$ that is evenly covered by $q_c^{\Ber}$. Since $X_c$ is an open analytic subspace of the analytification of an algebraic variety, we can choose a strict affinoid domain $V_z\subset X_c$ such that $z\in\operatorname{Int}_{X_c^{\Ber}}(V_z^{\Ber})$ and $V_z^{\Ber}\subset U_z$.
As $z$ ranges over $Z^{\Ber}$, the sets $\operatorname{Int}_{X_c^{\Ber}}(V_z^{\Ber})$ form an open cover of $Z^{\Ber}$. By compactness, finitely many of them cover $Z^{\Ber}$. Denote the corresponding affinoid domains by $V_1,\ldots,V_t$ and their evenly covered neighborhoods by $U_1,\ldots,U_t$. In particular, $Z^{\Ber}\subseteq\bigcup_{i=1}^tV_i^{\Ber}$.
For each $i$, choose one sheet over $U_i$, and let $\widetilde V_i\subset\mathcal M_c$ be the affinoid domain in that sheet mapping isomorphically to $V_i$. Let $A_i=q_c^{-1}(Z)\cap\widetilde V_i$.

The restriction of $q_c$ identifies $A_i$ with $Z\cap V_i$. Since $Z$ is a closed analytic subspace of $X_c$, the intersection $Z\cap V_i$ is a closed analytic subspace of the strict affinoid space $V_i$, and is therefore strict affinoid. Thus $A_i$ is quasi-compact and separated. Moreover, $Z\cap V_i$ is an analytic domain in the reduced space $Z$, so it is reduced and has dimension at most $\dim Z$. The same properties therefore hold for $A_i$.

Take $m\in(q_c^{-1}(Z))^{\Ber}$. Some $V_i^{\Ber}$ contains $q_c^{\Ber}(m)$. Since $\widetilde V_i\to V_i$ is an isomorphism, there is a point $a\in\widetilde V_i^{\Ber}$ with $q_c^{\Ber}(a)=q_c^{\Ber}(m)$. We have $a\in A_i^{\Ber}$ because $q_c^{\Ber}(m)\in Z^{\Ber}$. Since fibers of $q_c^{\Ber}$ are $\Gamma_c$-orbits, we have $m=\gamma a$ for some $\gamma\in\Gamma_c$. This proves the lemma.
\end{proof}

\begin{lemma}
\label{lem:pullback-quotient}
Let $B\subset\mathcal D(\Cp)$ be a set which is closed, $J$-invariant, and nowhere dense. For every $c$, the set $\pi_c^{-1}(B)$ is closed and nowhere dense in $\mathcal M_c(\Cp)$. It is also $\Gamma_c$-invariant, and its image in $X_c(\Cp)$ is closed and nowhere dense.
\end{lemma}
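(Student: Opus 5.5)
The argument splits into three parts: what happens on $\mathcal M_c(\Cp)$, what invariance gives, and descent to $X_c(\Cp)$ through $q_c$.

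\emph{Closed and nowhere dense upstairs.} The map $\pi_c$ is continuous on $\Cp$-points, so $\pi_c^{-1}(B)$ is closed. Suppose $U\subset\pi_c^{-1}(B)$ is a nonempty open subset of $\mathcal M_c(\Cp)$. Since $\pi_c$ is open on $\Cp$-points by hypothesis~(2), the set $\pi_c(U)$ is a nonempty open subset of $\mathcal D(\Cp)$. It is contained in $B$, which contradicts the assumption that $B$ is closed with empty interior. Hence $\pi_c^{-1}(B)$ is nowhere dense.

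\emph{$\Gamma_c$-invariance.} We have $\Gamma_c\subset J$ and $B$ is $J$-invariant. So for $m\in\pi_c^{-1}(B)$ and $\gamma\in\Gamma_c$, the equivariance \eqref{eq:criterion-equivariance} gives $\pi_c(\gamma m)=\gamma\pi_c(m)\in B$. Thus $\pi_c^{-1}(B)$ is $\Gamma_c$-stable.

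\emph{Descent.} Write $S=\pi_c^{-1}(B)$ and let $S'=q_c(S)\subset X_c(\Cp)$. By $\Gamma_c$-invariance and the fact that the fibers of $q_c$ on $\Cp$-points are $\Gamma_c$-orbits, we get $q_c^{-1}(S')=S$. The complement $X_c(\Cp)\setminus S'$ equals $q_c(\mathcal M_c(\Cp)\setminus S)$, because $q_c$ is surjective and $S$ is saturated. This is the image of an open set under the open map $q_c$, so it is open, and therefore $S'$ is closed. For nowhere density, let $V\subset S'$ be nonempty open. Then $q_c^{-1}(V)$ is open, it is nonempty by surjectivity, and it is contained in $q_c^{-1}(S')=S$. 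This contradicts the nowhere density of $S$.

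The only point needing care is the use of the properties of $q_c$ on $\Cp$-points. We need that the fibers there are exactly $\Gamma_c$-orbits, and that $q_c$ is open and surjective. These come from the stated properties of the analytic covering $q_c$, since rigid points map to rigid points and the fiber statement on Berkovich spaces restricts to $\Cp$-points. I expect this to be routine.
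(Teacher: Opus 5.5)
Your proof is correct and follows the paper's argument step for step. You get closedness and nowhere density of $\pi_c^{-1}(B)$ from continuity and openness of $\pi_c$, and $\Gamma_c$-invariance from equivariance and $\Gamma_c\subset J$. You then descend using the saturation identity $q_c^{-1}(q_c(\pi_c^{-1}(B)))=\pi_c^{-1}(B)$ together with openness and surjectivity of $q_c$ on $\Cp$-points, and your remark that the Berkovich fiber description restricts to $\Cp$-points makes explicit a point the paper uses tacitly.
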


\begin{proof}
The inverse image $\pi_c^{-1}(B)$ is closed because $\pi_c$ is continuous. By assumption, the induced map on $\Cp$-points is open in the $p$-adic topology. If $\pi_c^{-1}(B)$ contained a nonempty open set, its image would be a nonempty open subset of $B$. Thus $\pi_c^{-1}(B)$ is nowhere dense. For $m\in\pi_c^{-1}(B)$ and $\gamma\in\Gamma_c$, equivariance \eqref{eq:criterion-equivariance} gives $\pi_c(\gamma m)=\gamma\pi_c(m)\in B$, since $\Gamma_c\subset J$ and $B$ is $J$-invariant. Hence $\pi_c^{-1}(B)$ is $\Gamma_c$-invariant.

The fibers of $q_c$ are $\Gamma_c$-orbits, so this invariance gives $q_c^{-1}(q_c(\pi_c^{-1}(B)))=\pi_c^{-1}(B)$. Consequently, the complement of $q_c(\pi_c^{-1}(B))$ in $X_c(\Cp)$ is $q_c(\mathcal M_c(\Cp)\setminus\pi_c^{-1}(B))$. This complement is open because $q_c$ is an open map. Thus $q_c(\pi_c^{-1}(B))$ is closed. Suppose that it contains a nonempty open subset $U$. Since $q_c$ is continuous and surjective, $q_c^{-1}(U)$ is a nonempty open subset of $\mathcal M_c(\Cp)$. Then $q_c^{-1}(q_c(\pi_c^{-1}(B)))=\pi_c^{-1}(B)$ implies $q_c^{-1}(U)\subseteq\pi_c^{-1}(B)$, contradicting the nowhere density of $\pi_c^{-1}(B)$.
\end{proof}

\begin{proof}[Proof of \Cref{thm:global-criterion}]
For every $c\in C$, the space $(Z\cap X_c)^{\Ber}$ is a closed subset of the compact space $Z^{\Ber}$, hence compact. Apply \Cref{lem:finite-qc-lifts} to $Z\cap X_c$ for every $c\in C$. Since $C$ is finite, after relabeling we obtain finitely many spaces $A_1,\ldots,A_t$ and indices $c(1),\ldots,c(t)$ such that $A_i\subset W_{c(i)}$, $\dim A_i\leq\dim Z$, and, for every $c\in C$, 
\begin{equation}\label{eq:lemma3.4}
    W_c(\Cp)\subseteq\Gamma_c\cdot\bigcup_{i:c(i)=c}A_i(\Cp).
\end{equation} The spaces $A_i$ constructed in the lemma are reduced strict affinoid spaces, hence quasi-compact and separated. Since $\dim A_i<n$, the local hypothesis applied to $\pi_{c(i)}|_{A_i}$ shows that $B_i=\overline{J\pi_{c(i)}(A_i(\Cp))}^{\,\mathrm{cl}}$ is closed and nowhere dense in $\mathcal D(\Cp)$. Each element of $J$ acts by a homeomorphism, so $B_i$ is also $J$-invariant. The finite union $B=\bigcup_{i=1}^tB_i$ is also closed, $J$-invariant, and nowhere dense.

Let $m'$ be a lift of $t_h(y)$ for some $h\in\mathscr H$ and some $y\in Y_h(\Cp)$ with $s_h(y)\in Z$, and let $c'$ be the index for which $m'\in\mathcal M_{c'}$. By the period containment property, there are $c\in C$, $m\in W_c(\Cp)$, and $j\in J$ such that $\pi_{c'}(m')=j\pi_c(m)$. By \eqref{eq:lemma3.4}, there are $i$ with $c(i)=c$, a point $a\in A_i(\Cp)$, and $\gamma\in\Gamma_c$ such that $m=\gamma a$. Equivariance gives $\pi_c(m)=\gamma\pi_c(a)$. Since $\Gamma_c\subset J$, we obtain $\pi_{c'}(m')=j\gamma\pi_c(a)\in B_i\subset B$. Thus every lifted target point lies in $\pi_{c'}^{-1}(B)$ for the relevant $c'$, and its image lies in $q_{c'}(\pi_{c'}^{-1}(B))$. By \Cref{lem:pullback-quotient}, these images are closed and nowhere dense in their respective components. Their finite union is closed and nowhere dense in $\coprod_cX_c(\Cp)$ and contains all the target points above, hence also their $p$-adic closure. This proves the theorem.
\end{proof}

Under the same local hypothesis, compactness of $Z^{\Ber}$ may also be replaced by the existence of finitely many reduced strict affinoid spaces $A_i$ with $\dim A_i<n$ and analytic maps $f_i:A_i\to\mathcal D$ such that $\bigcup_cJ\pi_c(W_c(\Cp))\subseteq\bigcup_i\overline{Jf_i(A_i(\Cp))}^{\,\mathrm{cl}}$. The finite union on the right is closed, $J$-invariant, and nowhere dense. Period containment and \Cref{lem:pullback-quotient} give the same conclusion. The spaces $A_i$ need not cover the lifted subvariety.

\medskip
We prove the following proposition under a uniform reduction condition.

\begin{proposition}
\label{prop:uniform-period-reduction}
Assume \textup{(1)--(3)}. Let $Z\subset\coprod_cX_c$ be a closed reduced analytic subspace, let $W_c=q_c^{-1}(Z\cap X_c)$, and let $\mathscr H$ be a family of finite correspondences with the full period containment property for $Z$. Let $T=\bigcup_{c\in C}J\pi_c(W_c(\Cp))$. For every strict closed rational coordinate polydisc $V\subset\mathcal D$, translate and rescale its coordinates to identify it with the unit polydisc, and let $\red_V:V(\Cp)\to\mathbb A^n(\Fpbar)$ be coordinatewise reduction. Suppose that for every such $V$ there is a Zariski closed subset $\mathcal T_V\subset\mathbb A^n_{\Fpbar}$ with $\dim\mathcal T_V<n$ such that $\red_V(T\cap V(\Cp))\subseteq\mathcal T_V(\Fpbar)$. Then the conclusion of \Cref{thm:global-criterion} holds.
\end{proposition}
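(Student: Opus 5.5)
Our plan is to reproduce the final step of the proof of \Cref{thm:global-criterion}, but with the local hypothesis and compactness replaced by a direct argument that the closure $\overline{T}^{\,\mathrm{cl}}$ is nowhere dense in $\mathcal D(\Cp)$. After that we apply \Cref{lem:pullback-quotient} with $B=\overline{T}^{\,\mathrm{cl}}$. First, $T$ is $J$-invariant by construction. Each element of $J$ acts on $\mathcal D(\Cp)$ by a homeomorphism, so $B$ is closed and $J$-invariant. The full period containment property says that every lift $m'\in\mathcal M_{c'}(\Cp)$ of a target point $t_h(y)$ satisfies $\pi_{c'}(m')\in T\subset B$. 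Hence every target point lies in $q_{c'}(\pi_{c'}^{-1}(B))$. By \Cref{lem:pullback-quotient} this set is closed and nowhere dense in $X_{c'}(\Cp)$. The finite union over $c'\in C$ is then closed and nowhere dense, and it contains the closure of the set of target points. This is exactly as in the last paragraph of the proof of \Cref{thm:global-criterion}.

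So everything reduces to showing $\Int_{\mathcal D(\Cp)}\overline T^{\,\mathrm{cl}}=\varnothing$. Suppose, for contradiction, that $\overline T^{\,\mathrm{cl}}$ contains a nonempty open set $U\subset\mathcal D(\Cp)$. Since $\mathcal D$ is smooth of pure dimension $n$, every $\Cp$-point has a neighborhood basis of strict closed rational coordinate polydiscs. This follows from the nonarchimedean inverse function theorem applied to a local étale chart to affine space. Shrinking, we choose such a polydisc $V$ with $V(\Cp)\subset U$, and we identify it with the closed unit polydisc $\{|x_i|\le1\}$ as in the statement. Then $T\cap V(\Cp)$ is dense in $V(\Cp)$, because $V(\Cp)\subset U\subset\overline T^{\,\mathrm{cl}}$ and $V(\Cp)$ is open in $\mathcal D(\Cp)$. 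Closed polydiscs are open in the $p$-adic topology, which is what makes this step legitimate.

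Now use the reduction. For each $\bar x\in\mathbb A^n(\Fpbar)$, the residue polydisc $\red_V^{-1}(\bar x)=\{x:|x_i-\tilde x_i|<1\}$ is a nonempty open subset of $V(\Cp)$. Density of $T\cap V(\Cp)$ therefore gives a point of $T$ in every residue polydisc. It follows that $\red_V(T\cap V(\Cp))=\mathbb A^n(\Fpbar)$. On the other hand, the hypothesis gives $\red_V(T\cap V(\Cp))\subseteq\mathcal T_V(\Fpbar)$ with $\dim\mathcal T_V<n$. Since $\Fpbar$ is algebraically closed, a proper Zariski closed subset cannot contain all $\Fpbar$-points of $\mathbb A^n$, so this is a contradiction.

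The main point requiring care is the claim that strict closed rational coordinate polydiscs form a neighborhood basis on $\mathcal D(\Cp)$, and that they are open there. We would handle this by taking an étale map from an affinoid neighborhood to a polydisc, which is available because $\mathcal D$ is smooth. By the inverse function theorem this map is a local isomorphism on $\Cp$-points. Small polydiscs around the image point, with radii in $|\Cp^\times|$, then pull back to polydiscs in $\mathcal D$ that are rational domains. The rest of the argument is formal.
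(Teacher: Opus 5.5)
Your proposal is correct and follows the paper's proof. The paper likewise sets $B=\overline T^{\,\mathrm{cl}}$, takes a polydisc $V$ inside a given open set, and shows $B$ has empty interior using a residue polydisc $\red_V^{-1}(\bar z)$ with $\bar z\notin\mathcal T_V(\Fpbar)$, which is open and disjoint from $T$ (your argument that density forces $\red_V$ to be surjective is the contrapositive of this), before concluding via \Cref{lem:pullback-quotient} exactly as you do; your justification that such polydiscs exist inside every open set is a detail the paper takes for granted.
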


\begin{proof}
Let $B=\overline T^{\,\mathrm{cl}}$. Take any nonempty $p$-adic open subset $U\subset\mathcal D(\Cp)$ and a strict closed rational coordinate polydisc $V$ with $V(\Cp)\subset U$. Since $\dim\mathcal T_V<n$, choose $\bar z\in\mathbb A^n(\Fpbar)\setminus\mathcal T_V(\Fpbar)$. The reduction fiber $\red_V^{-1}(\bar z)$ is nonempty and open in $\mathcal D(\Cp)$. It is disjoint from $T$. Since it is open, it is also disjoint from $B$. Thus $U$ is not contained in $B$, proving that $B$ is nowhere dense.

The set $T$ is $J$-invariant. Since every element of $J$ acts by a homeomorphism, $B$ is also $J$-invariant. Period containment places every lifted target point in the corresponding $\pi_c^{-1}(B)$. By \Cref{lem:pullback-quotient}, the finite union $\bigcup_cq_c(\pi_c^{-1}(B))$ is closed and nowhere dense in $\coprod_cX_c(\Cp)$ and contains all target points, hence also their $p$-adic closure.
\end{proof}

Suppose that each $W_c$ has a quasi-compact exhaustion $W_c=\bigcup_{m\geq1}A_{c,m}$, and let $T_m=\bigcup_{c\in C}J\pi_c(A_{c,m}(\Cp))$. Then $T=\bigcup_{m\geq1}T_m$. This exhaustion alone does not give the required reduction condition. For each fixed target polydisc $V$, there must be a Zariski closed subset $\mathcal T_V\subset\mathbb A^n_{\Fpbar}$ with $\dim\mathcal T_V<n$ such that $\red_V(T_m\cap V(\Cp))\subseteq\mathcal T_V(\Fpbar)$ for every $m\geq1$. The subset $\mathcal T_V$ may depend on $V$, but not on $m$. Allowing a different low-dimensional closed subset $\mathcal T_{V,m}$ for each $m$ does not suffice, since $\bigcup_m\mathcal T_{V,m}$ may still be Zariski dense in $\mathbb A^n_{\Fpbar}$.

\section{Varshavsky's uniformization and the main theorem}
\label{sec:varshavsky}

In this section, we prove \Cref{thm:main-intro} by verifying the hypotheses of \Cref{thm:global-criterion} for the projective unitary Shimura varieties satisfying Varshavsky's uniformization hypotheses at the chosen place $v$. We first recall the local Drinfeld theorem and then describe the uniformization and the group actions used in the proof.

\subsection{The local theorem on a product of Drinfeld spaces}
\label{sec:drinfeld}

The following theorem is proved in \cite[Theorem~4.1]{FuLargeOrbits}. Fix $r\geq1$. For every $i$, let $d_i\geq2$ and let $F_i/\Qp$ be a finite extension equipped with an embedding $F_i\hookrightarrow\Cp$. 
Write $\Omega_{F_i}^{d_i-1}$ for Drinfeld's upper half space which is  the complement in $(\PP^{d_i-1}_{F_i})^{\an}$ of the analytifications of all $F_i$-rational hyperplanes. Let
\[
 \mathcal D=\prod_{i=1}^r(\Omega_{F_i}^{d_i-1})_{\Cp},
 \quad
 J=\prod_{i=1}^r\PGL_{d_i}(F_i),
 \quad
 n=\dim\mathcal D=\sum_{i=1}^r(d_i-1).
\]

\begin{theorem}
\label{thm:local-drinfeld}
Let $A$ be a reduced, quasi-compact, separated, strict $\Cp$-analytic space, and let $f:A\to\mathcal D$ be analytic. If $\dim A<n$, then $Jf(A(\Cp))$ is closed and nowhere dense in $\mathcal D(\Cp)$.
\end{theorem}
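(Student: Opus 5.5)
We must prove \Cref{thm:local-drinfeld}: if $A$ is reduced, quasi-compact, separated, strict, $f:A\to\mathcal D$ is analytic and $\dim A<n$, then $Jf(A(\Cp))$ is closed and nowhere dense in $\mathcal D(\Cp)$. The plan is to use the Bruhat--Tits reduction of Drinfeld space. Each factor $\Omega_{F_i}^{d_i-1}$ admits Deligne's formal model $\widehat\Omega_i$, with reduction map $\mathrm{sp}_i$ to a special fibre whose irreducible components are indexed by the vertices of the Bruhat--Tits building $\mathcal B_i$ of $\PGL_{d_i}(F_i)$. Each component is a copy of $\PP^{d_i-1}$ blown up along all rational linear subspaces, and the reduction map is $\PGL_{d_i}(F_i)$-equivariant. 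For the product we use the product formal model. Its special fibre has components $Y_{\underline x}=\prod_i Y_{x_i}$, indexed by tuples $\underline x=(x_1,\ldots,x_r)$ of vertices, and $J$ acts transitively on these tuples up to finitely many types. The open strata of the $Y_{\underline x}$ are products of complements of all $\mathbf F_{q_i}$-rational hyperplanes, $\prod_i\Omega^{d_i-1}_{\mathbf F_{q_i}}$.

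\textbf{Step 1: a finite stratum bound for $A$.} Since $A$ is quasi-compact, $f(A)$ lies in a quasi-compact domain of $\mathcal D$, so it meets only finitely many tubes $\mathrm{sp}^{-1}(Y_{\underline x})$. Suitable Raynaud models of $A$ and of $f$ (formal models, after finite extension and reduction of the special fibre) give reductions $\tilde f:\tilde A\to \mathrm{sp}(\mathcal D)$. Here $\tilde A$ is of dimension $\le\dim A<n$ because the reduction of a formal model has dimension equal to the generic dimension. Hence for each of the finitely many $\underline x$, the set $\mathrm{sp}(f(A))\cap Y^\circ_{\underline x}$ lies in a closed subvariety $\mathcal T_{\underline x}\subset Y^\circ_{\underline x}$ of dimension $<n$.

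\textbf{Step 2: $J$-translation.} Since the reduction map is $J$-equivariant, $\mathrm{sp}(Jf(A))\cap Y^\circ_{\underline x}$ is contained in a union of translates $g\mathcal T_{\underline y}$ with $g\in\operatorname{Stab}$ and $g\underline y=\underline x$. The key point is that the stabilizer of a vertex tuple acts on $Y_{\underline x}$ through the finite group $\prod_i\PGL_{d_i}(\mathbf F_{q_i})$. Only finitely many $\underline y$ are involved, and $J$-orbits of tuples are finite in type, so for each $\underline x$ this contributes only finitely many translates. The resulting set is a closed subvariety $\mathcal T'_{\underline x}$ of dimension $<n$, uniformly in $\underline x$ up to the $J$-action. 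This is exactly the uniform reduction condition: every open residue disc over a point of $Y^\circ_{\underline x}\setminus\mathcal T'_{\underline x}$ misses $Jf(A(\Cp))$. Points lying over the boundary strata are handled by the same argument on the strata of lower-dimensional intersections of components. Since these residue discs form a basis-like family inside each tube, with finer models obtained by refining via ramified base change, nowhere density follows. To make this rigorous, I would apply it to every refined model (the building subdivided) so that arbitrarily small polydiscs are reached.

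\textbf{Step 3: closedness.} Given $g_kf(a_k)\to z$, choose $g_k$ so that $f(a_k)$ stays in the fixed quasi-compact region. Then $g_k$ maps a fixed finite set of tubes near a fixed tube containing $z$, so the $g_k$ lie in a compact subset of $J$, by properness of the $J$-action on the building. Passing to subsequences, $g_k\to g$ and $a_k\to a$, using compactness of $A$ on classical points of the affinoid's Berkovich space together with $\Cp$-point closedness of images of affinoids. Hence $z=gf(a)$. The main obstacle is Step 2 at arbitrarily fine scales: refining the model must keep the $J$-equivariance and the finiteness of types. I would first try using the stabilizer-finite-quotient argument on each subdivided building level, which is uniform because the refinement is itself $J$-equivariant.
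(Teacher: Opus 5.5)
\textbf{The gap is in Step~2, at the point you yourself flag as the main obstacle, and the refinement you propose does not close it.}

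Steps~1--2 show only that $Jf(A(\Cp))$ misses the residue discs of the $J$-equivariant Deligne model lying over $Y^\circ_{\underline x}\setminus\mathcal T'_{\underline x}$. These discs have a fixed size: they are open unit polydiscs in the coordinates of the component. So you learn that the closure misses these particular discs, not that it has empty interior. Inside a small disc contained in the residue disc of a point of $\mathcal T'_{\underline x}$, or in the tube of a point of a boundary stratum, your argument says nothing.

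Subdividing the building by ramified base change does not produce arbitrarily small residue discs. Resolving the nodes of $\widehat\Omega\widehat\otimes\mathcal O_{F'}$ only inserts new components into the chains joining old ones. The smooth locus of each old component, and hence its residue discs, is unchanged. Already for $d=2$, the disc $\{z:|z-y|<1\}$ with $\bar y\in\Fpbar\setminus\mathbf F_q$ is a residue disc of every such refinement. So applying the argument ``to every refined model'' never reaches small polydiscs around such points, and these residue discs do not form a basis of the topology.

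\textbf{The missing idea} is to abandon the equivariant model at small scales and run your finite-quotient argument on arbitrary polydiscs, with properness supplying the finiteness. This is the ``finite-reduction argument for compact subsets of $J$'' that the cited proof combines with properness.
\begin{enumerate}[label=\textup{(\roman*)}]
\item Fix a strict closed coordinate polydisc $V\subset\mathcal D$ as in \Cref{prop:uniform-period-reduction}. Properness of the $J$-action makes
\[
C_V=\{g\in J: gf(A^{\Ber})\cap V^{\Ber}\neq\varnothing\}
\]
compact. You may use properness on the building, as in your Step~3, or directly on $\mathcal D^{\Ber}$.
\item For each $g\in C_V$ there is an open subgroup $U_g\subset J$ that preserves $g^{-1}V$ and acts trivially on its reduction $\red_V\circ g$. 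This holds because elements close to $1$ move the points of a polydisc by less than its radius.
\item Finitely many cosets $g_jU_{g_j}$ cover $C_V$, and therefore
\[
\red_V\bigl(Jf(A(\Cp))\cap V\bigr)\subseteq\bigcup_j\red_V\bigl(g_jf(A(\Cp))\cap V\bigr).
\]
\item Each term lies in the image of the reduction of the quasi-compact domain $(g_jf)^{-1}(V)\subset A$. This domain has dimension $<n$, so each term lies in a Zariski closed subset of dimension $<n$.
\end{enumerate}
This gives the uniform reduction condition for every $V$, and nowhere density follows as in \Cref{prop:uniform-period-reduction}. Your observation that stabilizers act through finite quotients is the right mechanism. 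It has to be applied to small open subgroups and arbitrary polydiscs, not to vertex stabilizers and the components of one fixed model.

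\textbf{A smaller point in Step~3.} $A(\Cp)$ is not compact, so you cannot extract a convergent subsequence of the $a_k$ there. Instead take a limit point $a$ in $A^{\Ber}$. Then use that the fibre of $gf$ over the rigid point $z$ is a nonempty closed analytic subspace of the quasi-compact space $A$, and hence has a $\Cp$-point.
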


The proof combines the properness of the $J$-action on $\mathcal D^{\Ber}$ with a finite-reduction argument for compact subsets of $J$. Quasi-compactness makes $f(A^{\Ber})$ compact and provides a finite affinoid cover of $A$ for the reduction argument.

\subsection{The global datum and the period domain}
We keep the unitary datum and the place $v$ fixed in the introduction, and let $G^{\mathrm{int}}=\operatorname{GU}(D^{\mathrm{int}},\alpha^{\mathrm{int}})$. Let $w_1,\ldots,w_r$ be the places of $K$ fixed in the introduction. For each $i$, let $\alpha_i:K_{w_i}\hookrightarrow E_v$ be the embedding constructed in \cite[\S2.5 and Lemma~2.6]{VarshavskyII}. By the same lemma, $E_v$ is the compositum of the subfields $\alpha_i(K_{w_i})$. All scalar extensions from $K_{w_i}$ below are taken through $\alpha_i$. Varshavsky chooses an inner form $G$ which is compact modulo its center at every real place and split at the places $v_i$; see \cite[\S2.7]{VarshavskyII}. At a uniformizing place the relevant local factors have the form
\begin{equation}
\label{eq:two-local-groups}
 \begin{aligned}
 G(F_{v_i}) \simeq
       \GL_d(K_{w_i})\times F_{v_i}^{\times},\quad
 G^{\mathrm{int}}(F_{v_i}) \simeq
       \widetilde D_i^{\times}\times F_{v_i}^{\times},
 \end{aligned}
\end{equation}
where $\widetilde D_i$ is the central division algebra over $K_{w_i}$ with Hasse invariant $1/d$. The extra factor is central and acts trivially on the projective period domain.

Let $\mathbf H^{\mathrm{int}}=\Res_{F/\mathbb Q}G^{\mathrm{int}}$ and $\mathbf H=\Res_{F/\mathbb Q}G$. Thus $\mathbf H^{\mathrm{int}}(\Adele_f)= G^{\mathrm{int}}(\Adele_{F,f})$. The Hecke correspondences below are those defined by elements of $\mathbf H^{\mathrm{int}}(\Adele_f)$. We use the $F$-group notation only when we decompose its local factors.

For each $i$, let $\Omega_i=\Omega_{K_{w_i}}^{d-1}$ be the complement of the analytifications of the $K_{w_i}$-rational hyperplanes in $(\PP^{d-1}_{K_{w_i}})^{\an}$. Let $\mathcal D=\prod_{i=1}^r(\Omega_i)_{\Cp}$ and $J=\prod_{i=1}^r\PGL_d(K_{w_i})$. These are the domain and group of \Cref{sec:drinfeld} with $F_i=K_{w_i}$ and $d_i=d$, so $\dim\mathcal D=r(d-1)$.

\subsection{The local actions and the double quotient}

For each $i$, let $\widehat{K_{w_i}^{\mathrm{nr}}}$ be the completion of the maximal unramified extension of $K_{w_i}$. Drinfeld's construction gives a sequence of finite étale covers
\[
\cdots\longrightarrow\Sigma_{K_{w_i}}^{d,\ell+1}\longrightarrow\Sigma_{K_{w_i}}^{d,\ell}\longrightarrow\cdots\longrightarrow\Sigma_{K_{w_i}}^{d,0},
\quad
\Sigma_{K_{w_i}}^{d,0}=\Omega_i\widehat\otimes_{K_{w_i}}\widehat{K_{w_i}^{\mathrm{nr}}},
\]
where the transition maps forget part of the level structure; see \cite[\S1.4.1]{VarshavskyI}. We denote this inverse system by $\Sigma_i$. Composing the maps to level zero with the projection $\Sigma_{K_{w_i}}^{d,0}\to\Omega_i$ gives compatible period projections, which we denote by $\rho_i:\Sigma_{K_{w_i}}^{d,\ell}\to\Omega_i$ at every level. The tower has commuting actions of $\GL_d(K_{w_i})$ and $\widetilde D_i^\times$, and these projections satisfy $\rho_i(gx)=\overline g\,\rho_i(x)$ and $\rho_i(\delta x)=\rho_i(x)$ for $g\in\GL_d(K_{w_i})$ and $\delta\in\widetilde D_i^\times$, where $\overline g$ is the image of $g$ in $\PGL_d(K_{w_i})$; see \cite[\S1.4.1]{VarshavskyI}. When the action of $\widetilde D_i^\times$ is written on the right, we use the convention $x\cdot\delta=\delta^{-1}x$. This right action commutes with the left $\GL_d(K_{w_i})$-action and leaves $\rho_i$ unchanged. After base change through $K_{w_i}\hookrightarrow E_v\hookrightarrow\Cp$, we write $\rho=\prod_i\rho_i$ for the product projection to $\mathcal D$ at each level.

In the notation of \cite[\S2.7]{VarshavskyII}, let $G'=\left(\prod_{i=1}^rF_{v_i}^{\times}\right)\times G (\Adele_{F,f}^{\{v_1,\ldots,v_r\}})$, where the superscript means that the factors at $v_1,\ldots,v_r$ are omitted from the finite adeles. Fix the identifications $G^{\mathrm{int}}(F_u)\simeq G(F_u)$ at the finite places $u\notin\{v_1,\ldots,v_r\}$ as in \cite[\S2.7]{VarshavskyII}. Together with \eqref{eq:two-local-groups}, these identifications give
\begin{equation}
\label{eq:global-adelic-decompositions}
\begin{aligned}
G(\Adele_{F,f})
  &\simeq \left(\prod_{i=1}^r\GL_d(K_{w_i})\right)\times G',\\
G^{\mathrm{int}}(\Adele_{F,f})
  &\simeq \left(\prod_{i=1}^r\widetilde D_i^\times\right)\times G'.
\end{aligned}
\end{equation}

Let $\mathcal O_{\widetilde D_i}$ be the maximal order of $\widetilde D_i$, and choose a uniformizer $\varpi_i$ of its center $K_{w_i}$. Following \cite[\S1.4.1]{VarshavskyI}, set $T_{0,i}=\mathcal O_{\widetilde D_i}^{\times}$ and $T_{\ell,i}=1+\varpi_i^\ell\mathcal O_{\widetilde D_i}$ for $\ell\geq1$. Each $T_{\ell,i}$ is normal in $\widetilde D_i^\times$. For $\ell'\geq\ell$, the transition map identifies $\Sigma_{K_{w_i}}^{d,\ell}$ with the quotient of $\Sigma_{K_{w_i}}^{d,\ell'}$ by $T_{\ell,i}/T_{\ell',i}$; see \cite[\S1.4.1]{VarshavskyI}.

Under the second decomposition in \eqref{eq:global-adelic-decompositions}, choose an integer $\ell\geq0$ and a compact open subgroup $U'\subset G'$ such that
\begin{equation}
\label{eq:varshavsky-product-level}
 \mathcal U=
 \left(\prod_{i=1}^rT_{\ell,i}\right)\times U'
 \subset G^{\mathrm{int}}(\Adele_{F,f})
\end{equation}
is sufficiently small and neat. Let $S=\operatorname{Sh}_{\mathcal U}(\mathbf H^{\mathrm{int}})$ denote the canonical model over $E$, and put $n=\dim S$. At this level, the variety in the introduction is $\mathcal{S}h=S_{\Cp}=S\times_E\Cp$, where the base change uses the fixed embedding $E\hookrightarrow E_v\hookrightarrow\Cp$. By \cite[Proposition~2.8, Lemma~2.9, \S2.10, and First Main Theorem~2.13]{VarshavskyII}, $S$ is smooth and projective and $n=r(d-1)=\dim\mathcal D$.

At the level $\mathcal U=(\prod_{i=1}^rT_{\ell,i})\times U'$ fixed in \eqref{eq:varshavsky-product-level}, the double quotient in \cite[\S2.7]{VarshavskyII} and the equivariant isomorphism of \cite[First Main Theorem~2.13]{VarshavskyII} give the following uniformization of $S_{E_v}^{\an}$. We write the $G(F)$-action on the left and the $U'$-action on the right, with $U'$ acting by right multiplication on the $G'$-coordinate:
\[
 S_{E_v}^{\an}\simeq G(F)\backslash \left[ \left(\prod_{i=1}^r \left( \Sigma_{K_{w_i}}^{d,\ell} \widehat\otimes_{K_{w_i},\,\alpha_i}E_v \right) \right)\times G' \right]/U'.
\]
If $\gamma=(\gamma_{\mathrm{unif}},\gamma')\in G(F)$ under the first decomposition in \eqref{eq:global-adelic-decompositions}, it acts by $\gamma\cdot(m,a)=(\gamma_{\mathrm{unif}}\cdot m,\gamma'a)$. For $(\delta,b)\in(\prod_i\widetilde D_i^\times)\times G'$, the right action is $(m,a)\cdot(\delta,b)=(m\cdot\delta,ab)$. These actions commute. The formulas are written before taking the quotient by $U'$. The full finite adelic group acts on the tower; at a fixed level its elements give Hecke correspondences, not in general automorphisms of $S$.

\subsection{Finite-level component presentations}

Let $C$ index the connected components of $\mathcal{S}h$, and let $X_c$ be the analytification of the component indexed by $c \in C$. The set $C$ is finite because $\mathcal{S}h$ is of finite type. In the double quotient, the class of the $G'$-coordinate in $G(F)\backslash G'/U'$ is locally constant, where $G(F)$ acts on $G'$ through $\gamma\mapsto\gamma'$. It is therefore constant on $X_c$. Choose a representative $a_c\in G'$ of this double coset. Different geometric components may have the same $a_c$.

By \cite[Remark~2.12]{VarshavskyII}, each $X_c$ has a finite \'etale map $f_c:X_c\to\Gamma_c\backslash\mathcal D$ for a torsion-free cocompact arithmetic lattice $\Gamma_c\subset J$. We choose $\Gamma_c$ and $f_c$ from the quotient construction associated with $a_c$ in \cite[\S2.7 and the proof of Proposition~2.8]{VarshavskyII}. Every point $x\in X_c(\Cp)$ has a representative $(m,a)$ with $a\in a_cU'$. For such a representative, the construction gives $f_c(x)=[\rho(m)]$, where the brackets denote the $\Gamma_c$-orbit. Define
\[
 \mathcal M_c=
 \mathcal D\times_{\Gamma_c\backslash\mathcal D}X_c,
 \quad
 q_c:\mathcal M_c\longrightarrow X_c,
 \quad
 \pi_c:\mathcal M_c\longrightarrow\mathcal D.
\]
Here $q_c$ and $\pi_c$ are the second and first projections, respectively. The group $\Gamma_c$ acts on $\mathcal M_c$ through the first factor, $\Gamma_c\backslash\mathcal M_c\simeq X_c$, and $\pi_c$ is finite \'etale and $\Gamma_c$-equivariant. If $(m,a)$ with $a\in a_cU'$ represents $x\in X_c(\Cp)$, then $(\rho(m),x)\in\mathcal M_c(\Cp)$ is a lift of $x$. Two lifts under $q_c$ differ by an element of $\Gamma_c\subset J$.

After choosing the finite set of adelic and geometric representatives for the fixed finite level, the analytic uniformization reads
\begin{equation}
\label{eq:component-uniformization}
 \mathcal{S}h^{\an}
 \simeq\coprod_{c\in C}X_c
 =\coprod_{c\in C}\Gamma_c\backslash\mathcal M_c.
\end{equation}
This decomposes the whole fixed level variety, and $C$ depends only on $\mathcal U$, not on a Hecke element.

\subsection{The full finite adelic period containment}

The tower isomorphism in \cite[Theorem~2.13]{VarshavskyII} is equivariant for $\mathbf H^{\mathrm{int}}(\Adele_f) =G^{\mathrm{int}}(\Adele_{F,f})$. Let $Z\subset \mathcal{S}h$ be a closed reduced algebraic subvariety. For each $c\in C$, put $W_c=q_c^{-1}(Z^{\an}\cap X_c)\subset\mathcal M_c$.

\begin{proposition}
\label{prop:full-period containment}
For every closed reduced $Z\subset \mathcal{S}h$, the full finite adelic Hecke correspondences satisfy the full period containment property for $Z^{\an}$ with respect to the component presentations in \eqref{eq:component-uniformization}.
\end{proposition}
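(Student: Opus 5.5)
We will verify the pointwise condition of \Cref{def:period containment} directly from the double quotient description, using that the group $G^{\mathrm{int}}(\Adele_{F,f})$ acts on the tower only through the factors $\widetilde D_i^\times$ and $G'$, and that neither factor changes the period $\rho(m)$ except through elements of $G(F)$. Fix $g\in\mathbf H^{\mathrm{int}}(\Adele_f)$ and write $g=(\delta,b)$ under the second decomposition in \eqref{eq:global-adelic-decompositions}. A point $y\in\mathcal{S}h_{\mathcal U_g}(\Cp)$ with $p_{1,g}(y)\in Z$ is represented at level $\mathcal U_g$ (which is again of product form up to shrinking, or at least the tower description applies at every level) by a pair $(m,a)$, with $m$ in the tower $\prod_i\Sigma_i$ at a deeper level and $a\in G'$. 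Then $p_{1,g}(y)$ is the class of $(m,a)$ at level $\mathcal U$, and $p_{2,g}(y)$ is the class of $(m,a)\cdot g=(m\cdot\delta,ab)$, by the equivariance of \cite[First Main Theorem~2.13]{VarshavskyII} and the definition of $p_{2,g}$ via right translation.

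The key step is to compute periods of lifts. Since $\rho_i(x\cdot\delta_i)=\rho_i(\delta_i^{-1}x)=\rho_i(x)$, we get $\rho(m\cdot\delta)=\rho(m)$, and compatibility of $\rho$ with the transition maps lets us pass from the deeper level to level $\ell$. Now let $c$ be the component containing $x=p_{1,g}(y)$. Using the left $G(F)$-action and the right $U'$-action we may replace $(m,a)$ by $\gamma\cdot(m,a)\cdot u$ with $\gamma a u\in a_cU'$; the period changes to $\overline{\gamma_{\mathrm{unif}}}\rho(m)$, and right multiplication by $u\in U'$ does not affect $m$. So after this normalization $(\rho(m),x)$ is a lift $m_0\in W_c(\Cp)$ of $x$ with $\pi_c(m_0)=\rho(m)$. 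Similarly, let $c'$ be the component of $t=p_{2,g}(y)$; choose $\gamma'\in G(F)$, $u'\in U'$ with $\gamma'(ab)u'\in a_{c'}U'$, so the canonical lift of $t$ has period $\overline{\gamma'_{\mathrm{unif}}}\rho(m\cdot\delta)=\overline{\gamma'_{\mathrm{unif}}}\rho(m)$. Here we must keep track that normalizing the first representative by $\gamma$ also changes the second; we simply work with the original $(m,a)$ and compare both canonical lifts to $\rho(m)$, so $\pi_{c'}(\text{lift of }t)=j\,\pi_c(m_0)$ with $j=\overline{\gamma'_{\mathrm{unif}}}\,\overline{\gamma_{\mathrm{unif}}}^{-1}\in J$.

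Finally, an arbitrary lift $m'$ of $t$ differs from the canonical one by an element of $\Gamma_{c'}\subset J$, as noted after the definition of $\mathcal M_c$, so $\pi_{c'}(m')=\gamma''j\pi_c(m_0)$ with $\gamma''j\in J$, which is \eqref{eq:abstract-period containment}. The main obstacle I expect is bookkeeping of levels: $\mathcal U_g$ need not be of the product form \eqref{eq:varshavsky-product-level}, and $p_{2,g}$ involves the intermediate level $g^{-1}\mathcal U_g g$. To handle this I would first pass to a smaller product level $\mathcal U''\subset\mathcal U_g$ (possible since the $T_{\ell,i}$ and compact opens of $G'$ form a basis), note that $\mathcal{S}h_{\mathcal U''}\to\mathcal{S}h_{\mathcal U_g}$ is surjective so every $y$ lifts, and run the above computation there, using only that the projections $\rho$ are compatible along the whole tower and invariant under $\widetilde D_i^\times$.
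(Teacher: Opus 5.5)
Your proposal is correct and follows essentially the same route as the paper: represent $y$ by a pair $(m,a)$, observe that the target point is represented by $(m\cdot\delta,ab)$, use $\rho(m\cdot\delta)=\rho(m)$, renormalize the $G'$-coordinate by an element of $G(F)$ whose uniformizing component moves the period only by an element of $J$, and absorb the ambiguity of lifts into $\Gamma_{c'}$. The paper normalizes the representative for $c$ first rather than comparing both lifts to the unnormalized period, which is only a cosmetic difference, and your level refinement is unnecessary: normality of each $T_{\ell,i}$ in $\widetilde D_i^\times$ gives $\mathcal U_g=(\prod_iT_{\ell,i})\times(U'\cap bU'b^{-1})$, already of product form with the same $\ell$, so the paper works directly at level $\mathcal U_g$.
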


\begin{proof}
Fix $h\in\mathbf H^{\mathrm{int}}(\Adele_f)$. Under the second decomposition in \eqref{eq:global-adelic-decompositions}, write $h=(\delta,h')$ with $\delta\in\prod_i\widetilde D_i^\times$ and $h'\in G'$. Let $\mathcal U_h=\mathcal U\cap h\mathcal U h^{-1}$. Since every $T_{\ell,i}$ is normal in $\widetilde D_i^\times$, we have $\mathcal U_h=(\prod_iT_{\ell,i})\times(U'\cap h'U'h'^{-1})$. As in \Cref{sec:definitions}, the map $s_h: \operatorname{Sh}_{\mathcal U_h}(\mathbf H^{\mathrm{int}}) \to S$ is the change of level map, and the map $t_h: \operatorname{Sh}_{\mathcal U_h}(\mathbf H^{\mathrm{int}}) \to S$ is induced by right translation by $h$.

Let $y\in \operatorname{Sh}_{\mathcal U_h}(\mathbf H^{\mathrm{int}})(\Cp)$ satisfy $s_h(y)\in Z$, and let $c$ be the component containing $s_h(y)$. Represent $y$ at level $\mathcal U_h$ by a pair $(\widetilde m,a)$, where $\widetilde m\in (\prod_{i=1}^r(\Sigma_{K_{w_i}}^{d,\ell}\widehat\otimes_{K_{w_i},\,\alpha_i}\Cp))(\Cp)$ and $a\in G'$. Replacing the pair by a $G(F)$-equivalent representative allows us to assume $a\in a_cU'$.  Since $f_c(s_h(y))=[\rho(\widetilde m)]$, the point $m=(\rho(\widetilde m),s_h(y))$ belongs to $W_c(\Cp)$ and lies above $s_h(y)$.

The map $s_h$ forgets part of the level structure, so $s_h(y)$ is represented at level $\mathcal U$ by the same pair $(\widetilde m,a)$. The map $t_h$ is induced by the translation by $h=(\delta,h')$, so $t_h(y)$ is represented by $(\widetilde m\cdot\delta,ah')$. Since the action of $\prod_i\widetilde D_i^\times$ leaves the period projection unchanged, we have $\rho(\widetilde m\cdot\delta)=\rho(\widetilde m)$.

Let $c'$ be the component containing $t_h(y)$. By the double quotient description and the choice of $a_{c'}$, there are $\gamma=(\gamma_{\mathrm{unif}},\gamma')\in G(F)$, written under the first decomposition in \eqref{eq:global-adelic-decompositions}, and $u'\in U'$ such that $\gamma'ah'u'=a_{c'}$. Therefore $t_h(y)$ is also represented by $(\gamma_{\mathrm{unif}}\cdot(\widetilde m\cdot\delta),a_{c'})$. Let $m'_0=(\rho(\gamma_{\mathrm{unif}}\cdot (\widetilde m\cdot\delta)),t_h(y))\in\mathcal M_{c'}(\Cp)$. This is a lift of $t_h(y)$. Let $m'$ be any lift of $t_h(y)$ to $\mathcal M_{c'}$. The fibers of $q_{c'}$ are $\Gamma_{c'}$-orbits, so $m'=\gamma_0\cdot m'_0$ for some $\gamma_0\in\Gamma_{c'}$. Hence
\[
 \begin{aligned}
 \pi_{c'}(m')
  &=\gamma_0\rho (\gamma_{\mathrm{unif}}\cdot
      (\widetilde m\cdot\delta))\\
  &=\gamma_0\overline{\gamma_{\mathrm{unif}}}\,
      \rho(\widetilde m)
   =\gamma_0\overline{\gamma_{\mathrm{unif}}}\,\pi_c(m)
   \in J\,\pi_c(W_c(\Cp)),
 \end{aligned}
\]
where the bar denotes the image in $J$.

The preceding calculation then verifies \eqref{eq:abstract-period containment} for every $h$, every $y\in \operatorname{Sh}_{\mathcal U_h}(\mathbf H^{\mathrm{int}})(\Cp)$ with $s_h(y)\in Z$, and every lift of $t_h(y)$. 

\end{proof}

\subsection{Proof of the main theorem}

\noindent\textbf{Product levels.}
We first apply the criterion at the product levels appearing in the uniformization.

\begin{theorem}
\label{thm:varshavsky-product-level}
Let $\mathcal U$ be any sufficiently small neat product level of the form \eqref{eq:varshavsky-product-level}, let $S=\operatorname{Sh}_{\mathcal U}(\mathbf H^{\mathrm{int}})$, let $\mathcal{S}h=S_{\Cp}$, and let $Z\subset\mathcal{S}h$ be a closed reduced algebraic subvariety. If $\dim Z<\dim S$, then $\Int_{S(\Cp)}\overline{\HO(Z)}^{\,\mathrm{cl}}=\varnothing$.
\end{theorem}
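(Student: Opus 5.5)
The plan is to apply \Cref{thm:global-criterion} to the component presentation \eqref{eq:component-uniformization}, with the family $\mathscr H$ of all finite adelic Hecke correspondences $\mathcal{S}h\xleftarrow{s_h}\operatorname{Sh}_{\mathcal U_h}(\mathbf H^{\mathrm{int}})_{\Cp}\xrightarrow{t_h}\mathcal{S}h$ indexed by $h\in\mathbf H^{\mathrm{int}}(\Adele_f)$. By definition, $\HO(Z)$ is the union over $h$ of $T_h(Z)(\Cp)$. Since $T_h(Z)$ is the reduced image of the finite map $t_h$ restricted to $s_h^{-1}(Z)$, its $\Cp$-points are exactly the points $t_h(y)$ with $s_h(y)\in Z$. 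So $\HO(Z)$ is precisely the set controlled by the criterion. Here $n=r(d-1)$, and the inequality $\dim Z<\dim S$ gives $\dim Z^{\an}<n$, since analytic and algebraic dimensions agree.

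Next I will check hypotheses (1)--(3) with $\mathcal D=\prod_i(\Omega_i)_{\Cp}$ and $J=\prod_i\PGL_d(K_{w_i})$.
\begin{enumerate}[label=\textup{(\arabic*)}]
\item $\mathcal M_c=\mathcal D\times_{\Gamma_c\backslash\mathcal D}X_c$ is smooth, separated, strict and of dimension $n$, being finite étale over $\mathcal D$. The group $\Gamma_c$ is a torsion-free discrete cocompact lattice acting through the first factor. It acts freely and properly discontinuously, because it does so on $\mathcal D$: the action of $J$ on $\mathcal D^{\Ber}$ is proper and $\Gamma_c$ is torsion free. Hence $q_c$ is an analytic covering whose fibers are $\Gamma_c$-orbits.
\item $\pi_c$ is finite étale between smooth spaces, so it is open on $\Cp$-points by the inverse function theorem recalled in \Cref{sec:definitions}. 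It is $\Gamma_c$-equivariant by construction.
\item The identification $\mathcal{S}h^{\an}\simeq\coprod_cX_c$ is exactly \eqref{eq:component-uniformization}, and it covers the whole space.
\end{enumerate}
The local hypothesis follows from \Cref{thm:local-drinfeld}: a reduced strict affinoid space is quasi-compact and separated, and closed nowhere dense sets have closure with empty interior.

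It remains to check compactness of $Z^{\Ber}$ and the period containment property. Since $S$ is projective (Varshavsky, \S2.10), $Z$ is a closed subvariety of a projective variety, so $Z^{\an}$ is proper and $(Z^{\an})^{\Ber}$ is compact. Full period containment for $Z^{\an}$ is \Cref{prop:full-period containment}. The criterion therefore shows that $\overline{\HO(Z)}^{\,\mathrm{cl}}$ is nowhere dense in $\coprod_cX_c(\Cp)=S(\Cp)$. This is the same as $\Int_{S(\Cp)}\overline{\HO(Z)}^{\,\mathrm{cl}}=\varnothing$.

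I expect the only real friction to be the technical verification in (1). One has to confirm that $\mathcal M_c$ is a genuine strict analytic space on which $\Gamma_c$ acts freely and properly discontinuously, with $q_c$ an analytic covering and open on $\Cp$-points. To handle this, I would use that $\mathcal D\to\Gamma_c\backslash\mathcal D$ is such a covering (Varshavsky, Remark~2.12 and the construction in \S2.7) and then base change this covering along the finite étale map $f_c$. Even coverings, and the property that fibers are orbits, are preserved under this base change.
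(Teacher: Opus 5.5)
Your proposal is correct and is essentially the paper's proof. You apply \Cref{thm:global-criterion} to the full component presentation \eqref{eq:component-uniformization}, using:
\begin{itemize}
\item the fiber-product construction of $\mathcal M_c$ for hypotheses (1)--(3);
\item \Cref{thm:local-drinfeld} for the local hypothesis;
\item projectivity of $S$ for compactness of $Z^{\Ber}$;
\item \Cref{prop:full-period containment} for period containment.
\end{itemize}
Your extra check that $\HO(Z)$ is exactly the set of $t_h(y)$ with $s_h(y)\in Z$ uses that $t_h$ is finite and $\Cp$ is algebraically closed; it spells out a step the paper leaves implicit.
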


\begin{proof}
Since $S$ is projective and $Z$ is closed, $Z^{\Ber}$ is compact. In the component presentations \eqref{eq:component-uniformization}, the maps $q_c$ are analytic coverings and the maps $\pi_c$ are finite \'etale and therefore open on $\Cp$-points. Thus \textup{(1)--(3)} of \Cref{subsec:criterion-setup} hold. The local hypothesis follows from \Cref{thm:local-drinfeld}. By \Cref{prop:full-period containment}, the full finite adelic Hecke correspondences have the full period containment property for $Z^{\an}$. Since $\dim Z<\dim\mathcal D$, \Cref{thm:global-criterion} applies to $Z^{\an}$ and proves the assertion on all geometric components in \eqref{eq:component-uniformization}.
\end{proof}

\noindent\textbf{Pass to arbitrary neat levels.}
In the following change of level lemma, $\mathbf H$ denotes the reductive group of an arbitrary Shimura datum, as in \Cref{sec:definitions}. We will apply it to $\mathbf H^{\mathrm{int}}$.

\begin{lemma}
\label{lem:change-of-level}
Let $V\triangleleft \mathcal K\subset\mathbf H(\Adele_f)$ be neat compact open subgroups. Let $S_V=\operatorname{Sh}_V(\mathbf H)_{\Cp}$ and $S_{\mathcal K}=\operatorname{Sh}_{\mathcal K}(\mathbf H)_{\Cp}$, and let $\pi:S_V\to S_{\mathcal K}$ be the change of level map. If $Z\subset S_{\mathcal K}$ is closed and reduced and $Z_V=(\pi^{-1}(Z))_{\mathrm{red}}$, then
\begin{equation}\label{eq:hecke-inverse-image}
     \pi^{-1} (\HO_{\mathcal K}(Z))=\HO_V(Z_V).
\end{equation}
If the $p$-adic closure of $\HO_V(Z_V)$ is nowhere dense in $S_V(\Cp)$, then the $p$-adic closure of $\HO_{\mathcal K}(Z)$ is nowhere dense in $S_{\mathcal K}(\Cp)$.
\end{lemma}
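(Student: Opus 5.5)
We prove \eqref{eq:hecke-inverse-image} by a pointwise comparison of Hecke correspondences at the two levels, using the double coset description $\operatorname{Sh}_{\mathcal K}(\Cp)=\mathbf H(\mathbb Q)\backslash X\times\mathbf H(\Adele_f)/\mathcal K$ (with $X$ the period domain, after fixing an isomorphism $\Cp\simeq\mathbb C$; neatness makes all maps finite étale, and the identities are equalities of sets of $\Cp$-points, which are preserved by this field isomorphism). In these terms, $x\in\HO_{\mathcal K}(Z)$ if and only if $x=[(\xi,ag)]_{\mathcal K}$ for some $g\in\mathbf H(\Adele_f)$ and some $[(\xi,a)]_{\mathcal K}\in Z$. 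Indeed, a point $y\in\operatorname{Sh}_{\mathcal K_g}$ is $[(\xi,a)]$, with $p_{1,g}(y)=[(\xi,a)]_{\mathcal K}$ and $p_{2,g}(y)=[(\xi,ag)]_{\mathcal K}$. Moreover $T_g(Z)(\Cp)$ is exactly the set-theoretic image, because $p_{2,g}$ is finite and hence closed. The same description holds at level $V$, with all $g$ allowed. So the orbit is the image in $\operatorname{Sh}_{\mathcal K}$ of the right $\mathbf H(\Adele_f)$-saturation of the preimage of $Z$ in the infinite-level tower.

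For the inclusion $\supseteq$, take $x_V=[(\xi,ag)]_V$ with $[(\xi,a)]_V\in Z_V$. Then $\pi([(\xi,a)]_V)=[(\xi,a)]_{\mathcal K}\in Z$, so $\pi(x_V)=[(\xi,ag)]_{\mathcal K}\in\HO_{\mathcal K}(Z)$. For the inclusion $\subseteq$, let $\pi(x_V)=[(\xi,ag)]_{\mathcal K}$ with $[(\xi,a)]_{\mathcal K}\in Z$. Write $x_V=[(\xi',b)]_V$. Equality at level $\mathcal K$ gives $\gamma\in\mathbf H(\mathbb Q)$ and $k\in\mathcal K$ with $\xi'=\gamma\xi$ and $b=\gamma agk$. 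Hence $x_V=[(\xi,agk)]_V$ is the $V$-level Hecke translate by $gk$ of $[(\xi,a)]_V$, and this point lies in $Z_V$ since it maps to $[(\xi,a)]_{\mathcal K}\in Z$. Normality of $V$ in $\mathcal K$ is not even needed for this set identity; the only point to watch is that the representative can be absorbed into the Hecke element, which is exactly the step above.

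For the nowhere density statement, use that $\pi$ is finite étale, hence continuous, surjective, and open on $\Cp$-points by \Cref{sec:definitions}. Let $B$ be the closure of $\HO_{\mathcal K}(Z)$. Continuity gives $\pi^{-1}(B)\supseteq\overline{\pi^{-1}(\HO_{\mathcal K}(Z))}^{\,\mathrm{cl}}=\overline{\HO_V(Z_V)}^{\,\mathrm{cl}}$. The reverse inclusion also holds: if $x\notin\overline{\pi^{-1}(\HO_{\mathcal K}(Z))}^{\,\mathrm{cl}}$, choose an open $U\ni x$ missing $\pi^{-1}(\HO_{\mathcal K}(Z))$. Then $\pi(U)$ is an open neighborhood of $\pi(x)$ missing $\HO_{\mathcal K}(Z)$, so $\pi(x)\notin B$. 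Hence $\pi^{-1}(B)=\overline{\HO_V(Z_V)}^{\,\mathrm{cl}}$. Now suppose $B$ contains a nonempty open set $U$. Since $\pi$ is continuous and surjective, $\pi^{-1}(U)$ is a nonempty open set contained in $\overline{\HO_V(Z_V)}^{\,\mathrm{cl}}$, which contradicts the hypothesis.

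The main obstacle is the bookkeeping in the inclusion $\subseteq$: one has to ensure that every $V$-level lift of a $\mathcal K$-level orbit point really is a $V$-level Hecke translate of a point of $Z_V$. This is resolved by absorbing $k\in\mathcal K$ into the Hecke element $gk$, which is allowed because the orbit uses all of $\mathbf H(\Adele_f)$.
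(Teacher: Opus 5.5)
Your proof is correct. Its second half is exactly the paper's: you show $\pi^{-1}(\overline{\HO_{\mathcal K}(Z)})=\overline{\HO_V(Z_V)}$ using continuity and openness of $\pi$ on $\Cp$-points, and then pull back a hypothetical open subset using surjectivity.

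For the set identity \eqref{eq:hecke-inverse-image} you take a genuinely different route.

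\emph{The paper's route.} It works with morphisms. It introduces the change-of-level maps $\pi_g:\operatorname{Sh}_{V_g}\to\operatorname{Sh}_{\mathcal K_g}$ and uses the commuting squares $\pi\circ s_{g,V}=s_{g,\mathcal K}\circ\pi_g$ and $\pi\circ t_{g,V}=t_{g,\mathcal K}\circ\pi_g$. Together with surjectivity of $\pi_g$ on $\Cp$-points, these give $\pi(\HO_V(Z_V))=\HO_{\mathcal K}(Z)$. It then uses normality of $V$ in $\mathcal K$, through $V_{gk}=V_g$ and $t_{gk,V}=R_k\circ t_{g,V}$, to show that $\HO_V(Z_V)$ is stable under the finite group $\mathcal K/V$, whose orbits are the fibers of $\pi$.

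\emph{Your route.} You transport the full double-coset description to $\Cp$-points and absorb the element $k\in\mathcal K$ directly into the Hecke element $gk$. This makes saturation of $\HO_V(Z_V)$ with respect to $\pi$ immediate. As you observe, it also shows that normality of $V$ in $\mathcal K$ is not needed for \eqref{eq:hecke-inverse-image}, although the application in \Cref{thm:varshavsky-main} supplies normality anyway.

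\emph{Trade-off.} Your argument rests entirely on the complex uniformization of the $\Cp$-points. The paper uses that uniformization only to obtain surjectivity of $\pi_g$; everything else is functoriality of the Hecke maps.

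Two small points need tightening:
\begin{enumerate}
\item The field isomorphism $\Cp\simeq\mathbb C$ must be chosen so that its composite with the fixed embedding $E\hookrightarrow\Cp$ is the inclusion of the reflex field $E\subset\mathbb C$. Such an isomorphism exists; with an arbitrary choice you would be describing a conjugate Shimura variety instead.
\item Surjectivity of $\pi$ on $\Cp$-points does not follow from $\pi$ being finite \'etale. It does follow from your double-coset description, since $[\xi,c]_{\mathcal K}=\pi([\xi,c]_V)$.
\end{enumerate}
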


\begin{proof}
For $g\in\mathbf H(\Adele_f)$, let $V_g=V\cap gVg^{-1}$ and $\mathcal K_g=\mathcal K\cap g\mathcal K g^{-1}$. Denote the source and target maps at level $V$ by $s_{g,V},t_{g,V}:\operatorname{Sh}_{V_g}(\mathbf H)_{\Cp}\to S_V$, and those at level $\mathcal K$ by $s_{g,\mathcal K},t_{g,\mathcal K}:\operatorname{Sh}_{\mathcal K_g}(\mathbf H)_{\Cp}\to S_{\mathcal K}$. The inclusion $V_g\subset\mathcal K_g$ induces a change of level map $\pi_g:\operatorname{Sh}_{V_g}(\mathbf H)_{\Cp}\to\operatorname{Sh}_{\mathcal K_g}(\mathbf H)_{\Cp}$ satisfying
\[
\pi\circ s_{g,V}=s_{g,\mathcal K}\circ\pi_g,
\quad
\pi\circ t_{g,V}=t_{g,\mathcal K}\circ\pi_g.
\]
The map $\pi_g$ is finite étale and surjective. Indeed, after base change of the canonical models from the reflex field to $\mathbb C$, the change-of-level map sends $[x,a]_{V_g}$ to $[x,a]_{\mathcal K_g}$ and is surjective. Surjectivity therefore holds for the canonical models and after base change to $\Cp$. Since $\Cp$ is algebraically closed, every nonempty finite fiber has a $\Cp$-point. Thus $\pi_g$ is also surjective on $\Cp$-points.

If $y_V\in\operatorname{Sh}_{V_g}(\mathbf H)(\Cp)$ satisfies $s_{g,V}(y_V)\in Z_V(\Cp)$, then $s_{g,\mathcal K}(\pi_g(y_V))=\pi(s_{g,V}(y_V))\in Z(\Cp)$, and $\pi(t_{g,V}(y_V))=t_{g,\mathcal K}(\pi_g(y_V))$. This proves $\pi(\HO_V(Z_V))\subseteq\HO_{\mathcal K}(Z)$. Conversely, let $y\in\operatorname{Sh}_{\mathcal K_g}(\mathbf H)(\Cp)$ satisfy $s_{g,\mathcal K}(y)\in Z(\Cp)$, and choose $y_V$ with $\pi_g(y_V)=y$. Then $\pi(s_{g,V}(y_V))=s_{g,\mathcal K}(y)\in Z(\Cp)$, so $s_{g,V}(y_V)\in Z_V(\Cp)$, while $\pi(t_{g,V}(y_V))=t_{g,\mathcal K}(y)$. As $g$ varies, this gives
\begin{equation}\label{eq:hecke-orbit-image}
    \pi(\HO_V(Z_V))=\HO_{\mathcal K}(Z).
\end{equation}

Since $V$ is normal in $\mathcal K$, right translation defines an action of the finite group $\mathcal K/V$ on $S_V$, and the fibers of $\pi$ are its orbits. For $k\in\mathcal K$, write $R_k$ for the induced automorphism of $S_V$. Normality gives $V_{gk}=V_g$, so the correspondences for $g$ and $gk$ at level $V$ have the same domain, and
\(s_{gk,V}=s_{g,V},\) \(t_{gk,V}=R_k\circ t_{g,V}.\)
Consequently, if $s_{g,V}(y_V)\in Z_V(\Cp)$, then $R_k(t_{g,V}(y_V))=t_{gk,V}(y_V)\in\HO_V(Z_V)$. Hence $\HO_V(Z_V)$ is invariant under $\mathcal K/V$ and contains every fiber of $\pi$ that it meets. Together with \eqref{eq:hecke-orbit-image}, this proves  \eqref{eq:hecke-inverse-image}.

All closures in the remainder of the proof are taken in the $p$-adic topology. For every subset $A\subset S_{\mathcal K}(\Cp)$, continuity gives $\overline{\pi^{-1}(A)}\subseteq\pi^{-1}(\overline A)$. For the reverse inclusion, let $x\in\pi^{-1}(\overline A)$ and let $U$ be an open neighborhood of $x$. Since $\pi$ is finite étale, its map on $\Cp$-points is open. Thus $\pi(U)$ is an open neighborhood of $\pi(x)$ and meets $A$, so $U$ meets $\pi^{-1}(A)$. Therefore $\pi^{-1}(\overline A)=\overline{\pi^{-1}(A)}$. Taking $A=\HO_{\mathcal K}(Z)$ and using \eqref{eq:hecke-inverse-image} gives
\[
\pi^{-1} (\overline{\HO_{\mathcal K}(Z)})
=
\overline{\HO_V(Z_V)}.
\]
Suppose that $\overline{\HO_{\mathcal K}(Z)}$ contains a nonempty open subset $U$. Since $\pi$ is continuous and surjective on $\Cp$-points, $\pi^{-1}(U)$ is a nonempty open subset of $S_V(\Cp)$ contained in $\overline{\HO_V(Z_V)}$, contrary to the assumed nowhere density.
\end{proof}

\begin{lemma}
\label{lem:product-level-refinement}
Every compact open subgroup $\mathcal K\subset\mathbf H^{\mathrm{int}}(\Adele_f)$ contains a normal compact open subgroup of the form \(\mathcal V=\left(\prod_{i=1}^rT_{\ell,i}\right)\times V'\) under \eqref{eq:global-adelic-decompositions}, where $V'\subset G'$ is compact open. The integer $\ell$ can be taken arbitrarily large. If $\mathcal K$ is neat, then $\mathcal V$ is neat.
\end{lemma}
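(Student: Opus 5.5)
The plan is to shrink $\mathcal K$ first to a product subgroup and then to a normal one, using a normal core. Under the second decomposition in \eqref{eq:global-adelic-decompositions}, $G^{\mathrm{int}}(\Adele_{F,f})\simeq(\prod_i\widetilde D_i^\times)\times G'$ is a product of locally profinite groups. The sets $(\prod_iT_{m,i})\times U''$, with $m\geq 0$ and $U''$ ranging over compact open subgroups of $G'$, form a neighborhood basis of the identity. Indeed, the $T_{m,i}$ form a basis at $1$ in $\widetilde D_i^\times$, and compact open subgroups form a basis in $G'$. Since $\mathcal K$ is open, it therefore contains some product $\mathcal W=(\prod_iT_{m,i})\times U''$. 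Replacing $m$ by any larger integer keeps $\mathcal W$ inside $\mathcal K$, because the $T_{m,i}$ decrease in $m$. This is the source of the freedom to take $\ell$ large.

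Next, set $V'=\bigcap_{k\in\mathcal K}k'U''k'^{-1}$, where $k'$ denotes the $G'$-component of $k$. The intersection is really over the image of $\mathcal K$ in $G'$, which is compact. The stabilizer of $U''$ under conjugation contains $U''$ and is open. The image of $\mathcal K$ is therefore covered by finitely many cosets of the stabilizer, so the intersection is finite. Hence $V'$ is a compact open subgroup of $G'$, normalized by the projection of $\mathcal K$ to $G'$.

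Put $\ell=m$ and $\mathcal V=(\prod_iT_{\ell,i})\times V'$. Then $\mathcal V\subset\mathcal W\subset\mathcal K$ and $\mathcal V$ is compact open. For normality, take $k=(\delta,k')\in\mathcal K$. Conjugation acts componentwise, so
\[
k\mathcal Vk^{-1}=\Bigl(\prod_i\delta_iT_{\ell,i}\delta_i^{-1}\Bigr)\times k'V'k'^{-1}.
\]
Each $T_{\ell,i}$ is normal in all of $\widetilde D_i^\times$, as recalled before \eqref{eq:varshavsky-product-level}, and $V'$ is normalized by $k'$. Hence $k\mathcal Vk^{-1}=\mathcal V$.

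Finally, neatness passes to subgroups. This holds under the standard definition via the torsion-freeness of the subgroup of $\overline{\mathbb Q}^\times$ generated by the eigenvalues of the local components. A subgroup of a neat group is therefore neat, and $\mathcal V\subset\mathcal K$ is neat whenever $\mathcal K$ is.

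The only point requiring care is normality in the uniformizing factors. Taking a normal core there would in general destroy the product form $\prod_iT_{\ell,i}$. This is avoided because the $T_{\ell,i}$ are already normal in the full groups $\widetilde D_i^\times$. The core construction is therefore needed only in $G'$, and there it preserves the product form automatically.
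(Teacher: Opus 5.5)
Your proposal is correct and follows essentially the same route as the paper. You place a product neighborhood $(\prod_i T_{m,i})\times U''$ inside $\mathcal K$, replace the $G'$-factor by the intersection of its finitely many conjugates under $\operatorname{pr}_{G'}(\mathcal K)$, and use that each $T_{\ell,i}$ is normal in all of $\widetilde D_i^\times$, so no core is needed in the uniformizing factors. The one difference is how finiteness of the conjugates is obtained: you use compactness of $\operatorname{pr}_{G'}(\mathcal K)$ together with openness of the normalizer of $U''$, whereas the paper uses that $V'_0$ has finite index in $\operatorname{pr}_{G'}(\mathcal K)$.
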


\begin{proof}
Since $\mathcal K$ is open, it contains $(\prod_iT_{\ell_0,i})\times V'_0$ for some $\ell_0$ and some compact open subgroup $V'_0\subset G'$. Let $\operatorname{pr}_{G'}$ be the projection to $G'$ in the second decomposition of \eqref{eq:global-adelic-decompositions}. Let
\(
V'=\bigcap_{a\in\operatorname{pr}_{G'}(\mathcal K)}aV'_0a^{-1}.
\)
Since $V'_0$ has finite index in $\operatorname{pr}_{G'}(\mathcal K)$, only finitely many distinct conjugates occur in this intersection. Thus $V' \subset V'_0$ is a compact open subgroup of $G'$ which is normal in $\operatorname{pr}_{G'}(\mathcal K)$. 

Since the unique maximal order $\mathcal{O}_{\widetilde D_i}$ and every power of its Jacobson radical are preserved by conjugation by $\widetilde D_i^\times$, the principal congruence subgroup $T_{\ell,i}$ is normal in $\widetilde D_i^\times$. It follows that for every $\ell\geq\ell_0$, the subgroup $\mathcal V=(\prod_iT_{\ell,i})\times V'$ is contained in $\mathcal K$ and normal in $\mathcal K$. If $\mathcal K$ is neat, so is $\mathcal V$.
\end{proof}

The two lemmas together give the arbitrary level statement of \Cref{thm:main-intro}.

\begin{theorem}
\label{thm:varshavsky-main}
Let $\mathcal K\subset\mathbf H^{\mathrm{int}}(\Adele_f)$ be any sufficiently small neat compact open subgroup, put $S_{\mathcal K}=\operatorname{Sh}_{\mathcal K}(\mathbf H^{\mathrm{int}})_{\Cp}$, and let $Z\subset S_{\mathcal K}$ be a closed reduced algebraic subvariety. If $\dim Z<\dim S_{\mathcal K}$, then the $p$-adic closure of $\HO_{\mathcal K}(Z)$ is nowhere dense in $S_{\mathcal K}(\Cp)$.
\end{theorem}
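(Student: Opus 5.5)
The plan is to reduce \Cref{thm:varshavsky-main} to the product-level statement \Cref{thm:varshavsky-product-level} through the change of level lemma \Cref{lem:change-of-level}, using \Cref{lem:product-level-refinement} to produce a suitable normal subgroup.

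First, given the sufficiently small neat $\mathcal K$, apply \Cref{lem:product-level-refinement} to obtain a normal compact open subgroup $\mathcal V=(\prod_iT_{\ell,i})\times V'\triangleleft\mathcal K$. This subgroup is neat because $\mathcal K$ is. Since $\ell$ may be taken arbitrarily large, and $V'$ may be shrunk further by intersecting with a smaller compact open subgroup of $G'$ and again taking the finite intersection of $\operatorname{pr}_{G'}(\mathcal K)$-conjugates, we can arrange that $\mathcal V$ is "sufficiently small" in the sense required for Varshavsky's uniformization at product levels \eqref{eq:varshavsky-product-level}. Here I am reading "sufficiently small" as a condition inherited by subgroups; I would check this against \cite{VarshavskyII}, where smallness is only used so that the quotients $\Gamma_c\backslash\mathcal D$ are taken by torsion-free lattices. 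The change-of-level map $\pi:S_{\mathcal V}\to S_{\mathcal K}$ is finite étale and surjective, so $\dim S_{\mathcal V}=\dim S_{\mathcal K}=r(d-1)$.

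Next, set $Z_{\mathcal V}=(\pi^{-1}(Z))_{\mathrm{red}}$. This is a closed reduced algebraic subvariety of $S_{\mathcal V}$. Because $\pi$ is finite, $\dim Z_{\mathcal V}=\dim Z<\dim S_{\mathcal V}$. Applying \Cref{thm:varshavsky-product-level} at level $\mathcal V$ shows that the $p$-adic closure of $\HO_{\mathcal V}(Z_{\mathcal V})$ has empty interior in $S_{\mathcal V}(\Cp)$. The second assertion of \Cref{lem:change-of-level}, applied with $V=\mathcal V$, then gives that the closure of $\HO_{\mathcal K}(Z)$ is nowhere dense in $S_{\mathcal K}(\Cp)$.

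The only delicate point is whether the product-level subgroup $\mathcal V$ genuinely falls within the hypotheses of \Cref{thm:varshavsky-product-level}, namely sufficiently small and neat and of the exact shape \eqref{eq:varshavsky-product-level}. Everything else is bookkeeping with results already proved. If "sufficiently small" involves a condition at the $v_i$, for instance $\ell\geq1$, I would handle it by enlarging $\ell$, which \Cref{lem:product-level-refinement} allows.
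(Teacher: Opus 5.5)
Your proposal is correct and follows the paper's proof essentially verbatim: choose a normal product-level subgroup $\mathcal V\triangleleft\mathcal K$ via \Cref{lem:product-level-refinement}, apply \Cref{thm:varshavsky-product-level} to $Z_{\mathcal V}=(\pi^{-1}(Z))_{\mathrm{red}}$, and descend with \Cref{lem:change-of-level}. Your extra remark on keeping $\mathcal V$ sufficiently small, by shrinking $V'$ through a finite intersection of conjugates and enlarging $\ell$, addresses a point the paper leaves implicit.
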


\begin{proof}
Choose a normal product level $\mathcal V\subset\mathcal K$ as in \Cref{lem:product-level-refinement}, put $S_{\mathcal V}=\operatorname{Sh}_{\mathcal V}(\mathbf H^{\mathrm{int}})_{\Cp}$, and let $\pi:S_{\mathcal V}\to S_{\mathcal K}$ be the change of level map. Put $Z_{\mathcal V}=(\pi^{-1}(Z))_{\mathrm{red}}$. Since $\pi$ is finite \'etale and surjective, $\dim Z_{\mathcal V}=\dim Z$ and $\dim S_{\mathcal V}=\dim S_{\mathcal K}$. \Cref{thm:varshavsky-product-level} shows that the $p$-adic closure of $\HO_{\mathcal V}(Z_{\mathcal V})$ is nowhere dense in $S_{\mathcal V}(\Cp)$. The result then follows from \Cref{lem:change-of-level}.
\end{proof}

\begin{remark}\label{rmk:Q-simple}
The adjoint group $\Res_{F/\mathbb Q}\PGU(D^{\mathrm{int}},\alpha^{\mathrm{int}})$ is $\mathbb Q$-simple. Indeed, after base change to $\overline{\mathbb Q}$, it is a product of copies of $\PGL_d$ indexed by the embeddings $F\hookrightarrow\overline{\mathbb Q}$. Every connected normal subgroup is a product of a subset of these factors. If the subgroup is defined over $\mathbb Q$, this subset is Galois invariant. Since the Galois action on the embeddings of $F$ is transitive, the subset is either empty or the whole set. In particular, when $r>1$, the period domain is a product, but the adjoint Shimura datum does not decompose as a nontrivial product.
\end{remark}

The main theorem applies only at places satisfying Varshavsky's uniformization hypotheses. However, for any prescribed rational prime $p$, one can choose a unitary Shimura datum of dimension $d-1$ for each $d\geq2$ so that these hypotheses hold at a place above $p$.
\begin{proposition}\label{prop:one-place-existence}
For every rational prime $p$ and every $d\geq2$, there is a Varshavsky datum of degree $d$ with $r=1$, depending on $p$, whose Shimura tower is globally Drinfeld-uniformized at a place above $p$.
\end{proposition}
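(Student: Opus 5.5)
The plan is a direct construction with $r=1$: choose $F$, $K$, $D^{\mathrm{int}}$ and $\alpha^{\mathrm{int}}$ so that the unitary group has signature $(d-1,1)$ at exactly one real place and is definite at the others, and so that the single place $w_1$ of $K$ above $p$ determined by the reflex place satisfies $v_1=w_1|_F$ split in $K$ and $\operatorname{inv}_{w_1}D^{\mathrm{int}}=1/d$. The construction then proceeds in four steps.

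\textbf{Fields.} Take $F$ totally real with $[F:\mathbb Q]\geq2$ (e.g.\ real quadratic), chosen so that some place $v_1\mid p$ exists. Choose $K=F(\sqrt{-\beta})$ with $\beta\in F$ totally positive, adjusted by weak approximation so that $-\beta$ is a nonzero square in $F_{v_1}$. Then $v_1$ splits in $K$ as $w_1\bar w_1$, and $K$ is CM.

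\textbf{Division algebra.} By class field theory (Albert--Brauer--Hasse--Noether), choose a central simple algebra $D^{\mathrm{int}}$ over $K$ of degree $d$ with invariants $1/d$ at $w_1$, $-1/d$ at $\bar w_1$, and $0$ elsewhere. The invariants sum to zero, and the relation $\operatorname{inv}_{\bar w}=-\operatorname{inv}_w$ at split places is the standard criterion for $D^{\mathrm{int}}$ to admit an involution of the second kind (Landherr's theorem). At non-split places the invariants are trivial, and since $K/F$ is nonsplit at infinitely many places there is no parity obstruction.

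\textbf{Signatures.} Choose the involution $\alpha^{\mathrm{int}}$, equivalently a hermitian form twisted by an element of $D^{\mathrm{int}}$, with the prescribed real signatures: $(d-1,1)$ at one real place $\infty_1$ and definite at all others. I will use the Hasse principle and classification of hermitian forms over $K/F$ (Landherr/Kneser for unitary groups of division algebras), with local freedom at archimedean places. The only global constraint is a determinant/discriminant condition, which the nonarchimedean places can absorb.

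\textbf{Matching the place to the reflex field, and the main obstacle.} The hardest step is ensuring that the place $w_1$ attached by Varshavsky's recipe in \cite[\S2.5]{VarshavskyII} to the reflex place $v$ and the embedding at $\infty_1$ is exactly the one carrying invariant $1/d$, rather than $\bar w_1$, where the invariant would be $-1/d$. My plan is to choose the reflex place after building $D^{\mathrm{int}}$. The reflex field $E$ is the image of $K$ under the CM type at $\infty_1$, or its conjugate, so $E\simeq K$. I pick $v$ to be the place of $E$ corresponding to $w_1$ under this identification. If the convention instead singles out $\bar w_1$, I will replace $D^{\mathrm{int}}$ by its opposite algebra, which negates all invariants. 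Since $r=1$, distinctness of the $v_i$ is automatic.

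Finally, the hypotheses of \Cref{thm:main-intro} hold. The global uniformization then follows from \cite[First Main Theorem~2.13]{VarshavskyII}, giving a projective Shimura variety of dimension $d-1$.
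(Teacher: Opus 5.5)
Your construction is correct but organized differently from the paper's. The paper takes $F=\mathbb Q$, so $r=1$ is automatic. It uses the explicit field $K=\mathbb Q(\sqrt{1-4p})$, in which $p$ splits by Hensel's lemma since $T^2-T+p\equiv T(T-1)\pmod p$, and builds $D^{\mathrm{int}}$ by Brauer--Hasse--Noether as you do. It then cites \cite[Proposition~2.1.7]{VarshavskyI} for an involution of signature $(d-1,1)$ and \cite[Proposition~3.1.3]{VarshavskyI} for $E=K$.

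You instead work over a general totally real $F$, obtain the splitting of $v_1$ by weak approximation, and establish the involution yourself. Your requirement $[F:\mathbb Q]\geq2$ is unnecessary, and $F=\mathbb Q$ is simpler. In exchange, you get more examples, including ones with definite real places. Your check that the reflex recipe selects $w_1$ rather than $\overline w_1$ is also more careful than the paper's bare choice $v=w$. Choosing the other place of $E\simeq K$ above $p$ already handles the bad case, so the opposite algebra is not needed.

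Your signature step is only sketched, but it is easier than you suggest. Every unitary involution is $\operatorname{Int}(c)\circ\alpha_0$ for a fixed one $\alpha_0$ and an invertible $\alpha_0$-symmetric $c$, and every real signature arises this way locally. The symmetric elements form an $F$-vector space dense in its real completion, while invertibility and signature are open conditions. So weak approximation gives any prescribed real signatures with no global constraint, and neither the Hasse principle nor absorption at finite places is needed.

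Similarly, the corestriction criterion for unitary involutions has no parity condition, so your remark about infinitely many nonsplit places is beside the point. For even $d$, a parity condition enters only in the existence of Varshavsky's auxiliary inner form $G$. It holds automatically, because one real place and one split pair are modified together.
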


\begin{proof}
Let $K=\mathbb Q(\sqrt{1-4p})$. Write $p\mathcal O_K=w\overline w$. The polynomial $T^2-T+p$ has negative discriminant and reduces modulo $p$ to $T(T-1)$. Hensel's lemma therefore gives $K\otimes_{\mathbb Q}\mathbb Q_p\simeq\mathbb Q_p\times\mathbb Q_p$, so $p$ splits in $K$. Set $F=\mathbb Q$, take $D=M_d(K)$, and equip $D$ with the conjugate-transpose involution $\alpha$. Then $\alpha$ is positive definite at the real place, and $D$ is split at $w$.

By the Brauer--Hasse--Noether theorem, as in \cite[\S2.1.6]{VarshavskyI}, there is a central division algebra $D^{\mathrm{int}}$ of degree $d$ over $K$ with local invariants $1/d$ at $w$, $-1/d$ at $\overline w$, and zero at every other place. In particular, $D^{\mathrm{int}}$ is locally isomorphic to $D=M_d(K)$ away from $w$ and $\overline w$. By \cite[Proposition~2.1.7]{VarshavskyI}, it has an involution $\alpha^{\mathrm{int}}$ of the second kind with signature $(d-1,1)$ at the real place. The reflex field is $K$ by \cite[Proposition~3.1.3]{VarshavskyI} so we take $v=w$ and fix an embedding $K_w\hookrightarrow\Cp$. The distinctness condition in \cite[\S2.5]{VarshavskyII} is automatic because $r=1$; its splitting and Brauer-invariant conditions hold by construction. The required equivariant global uniformization now follows from \cite[Theorem~2.13]{VarshavskyII}.
\end{proof}

\section{A PEL datum with the same adjoint datum}
\label{subsec:pel-central-lift}

Let $\mathbb S=\Res_{\mathbb C/\mathbb R}\mathbf G_m$, and let $h:\mathbb S\to\mathbf H^{\mathrm{int}}_{\mathbb R}$ be Varshavsky's homomorphism. By \cite[\S2.2]{VarshavskyII}, its restriction to $\mathbb R^\times$ is trivial. This datum is not of Hodge type, since the restriction of the homomorphism in the Siegel datum to $\mathbb R^\times$ is the nontrivial scalar weight.

Let $\nu:\mathbf H^{\mathrm{int}}\to \Res_{F/\mathbb Q}\mathbf G_m$ be the similitude character, and let $\mathbf U=\ker\nu$. Regard $\mathbf G_m$ as the diagonal subtorus of $\Res_{F/\mathbb Q}\mathbf G_m$, and define
\[
 \mathbf H_0=\mathbf H^{\mathrm{int}}
 \times_{\Res_{F/\mathbb Q}\mathbf G_m}\mathbf G_m,
 \quad
 \mathbf T=\Res_{K/\mathbb Q}\mathbf G_m
 \times_{\Res_{F/\mathbb Q}\mathbf G_m}\mathbf G_m,
\]
where the map from $\Res_{K/\mathbb Q}\mathbf G_m$ is induced by $N_{K/F}$. Both groups have a multiplier with values in $\mathbf G_m$, and scalar multiplication identifies $\mathbf T$ with a central subgroup of $\mathbf H_0$. Define $\mathbf H^\sharp=\mathbf T\times_{\mathbf G_m}\mathbf H_0$ using these two multiplier characters.

At every real place, choose the embedding $K \hookrightarrow \mathbb C$ used to define $h$, and let $\eta:\mathbb S\to\mathbf T_{\mathbb R}$ be defined by \(\eta(z)= (\overline z^{-1},\ldots,\overline z^{-1})\) for $z \in \mathbb S(\mathbb R)$. Let $h_0=\eta h$. For a real embedding $\tau:F\hookrightarrow\mathbb R$, the corresponding component of $h_0$ is
\[
 h_{0,\tau}(z)=
 \begin{cases}
  \operatorname{diag}(\bar z^{-1},\ldots,\bar z^{-1},z^{-1}),
       &\tau\text{ has signature }(d-1,1),\\
 \bar z^{-1}I_d,&\tau\text{ definite}.
 \end{cases}
\]
Here the formula for $h$ is the one in \cite[\S2.2]{VarshavskyII}. Both $\eta$ and $h_0$ have multiplier $(z\bar z)^{-1}$, so $h^\sharp=(\eta,h_0)$ takes values in $\mathbf H^\sharp_{\mathbb R}$. Let $\mathcal X_0$ and $\mathcal X_{\mathbf T}$ be the conjugacy classes of $h_0$ and $\eta$.

\begin{proposition}\label{prop:pel-datum}
Let $\mathcal X^\sharp$ and $\mathcal X$ be the conjugacy classes of $h^\sharp$ and $h$, respectively. Then $(\mathbf H^\sharp,\mathcal X^\sharp)$ is a PEL Shimura datum. The map $q:\mathbf H^\sharp\to\mathbf H^{\mathrm{int}}$ given by $q(t,g)=t^{-1}g$ is a morphism of Shimura data, and it induces an isomorphism $(\mathbf H^\sharp)^{\mathrm{ad}}\simeq (\mathbf H^{\mathrm{int}})^{\mathrm{ad}}$.
\end{proposition}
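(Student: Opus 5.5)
I will treat the three assertions separately: that $q$ is a morphism of Shimura data inducing an isomorphism on adjoint groups, and that $(\mathbf H^\sharp,\mathcal X^\sharp)$ is a PEL Shimura datum.

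\emph{Step 1: $q$ is a homomorphism.} Since $\mathbf T$ is central in $\mathbf H_0\subset\mathbf H^{\mathrm{int}}$, the map $q(t,g)=t^{-1}g$ is a homomorphism of algebraic groups. On real points I will compute $q\circ h^\sharp=\eta^{-1}h_0=\eta^{-1}\eta h=h$. Hence $q$ carries $\mathcal X^\sharp$ into $\mathcal X$, and $q$ is a morphism of Shimura data once the source is known to be one.

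\emph{Step 2: adjoint groups.} The kernel of $q$ is $\{(t,t):t\in\mathbf T\}$, which is central. For the image, $q(\mathbf H^\sharp)$ contains $\mathbf U=\ker\nu$ (take $t=1$, $g\in\mathbf U$), and $\mathbf U$ contains the derived group of $\mathbf H^{\mathrm{int}}$. Therefore $q$ is a central isogeny onto a subgroup containing $\mathbf H^{\mathrm{int},\mathrm{der}}$, and it induces a surjection on adjoint groups. Injectivity on adjoint groups holds because $q^{-1}(Z(\mathbf H^{\mathrm{int}}))$ is central in $\mathbf H^\sharp$. To verify this, I will observe that $\mathbf H^\sharp\subset\mathbf T\times\mathbf H_0$ and $\mathbf H_0\supset\mathbf U$ has the same adjoint group as $\mathbf H^{\mathrm{int}}$. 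Checking this over $\overline{\mathbb Q}$ reduces to the factor description of $\mathbf H^{\mathrm{int}}_{\overline{\mathbb Q}}$ as a subgroup of $\prod_\tau \GL_d\times\mathbf G_m$.

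\emph{Step 3: Deligne's axioms.} These are SV1 (the adjoint action of $h^\sharp$ on $\operatorname{Lie}$ has Hodge types only in $\{(-1,1),(0,0),(1,-1)\}$), SV2 (conjugation by $h^\sharp(i)$ is a Cartan involution of the adjoint group), and SV3 (the adjoint group has no $\mathbb Q$-factor on which $h$ is trivial). All three depend only on the adjoint datum, which now coincides with Varshavsky's, so they follow from \cite[\S2.2]{VarshavskyII}. Alternatively, SV3 can be read off from \Cref{rmk:Q-simple}, since $r\geq1$.

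\emph{Step 4: the PEL structure.} This is the main obstacle. I will take $V=D^{\mathrm{int}}$ as a left $D^{\mathrm{int}}$-module, following Kottwitz's standard unitary construction, with the alternating form $\psi(x,y)=\operatorname{Tr}_{K/\mathbb Q}\operatorname{tr}_{D/K}(\xi x\,y^{*})$ for suitable $\xi$ with $\xi^*=-\xi$. Here $*$ is an involution on $D^{\mathrm{int}}$ built from $\alpha^{\mathrm{int}}$ and a unit $\beta$, the latter chosen so that $\psi$ has the right signature.

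The idea is to realize $\mathbf H^\sharp$ as the group of $D^{\mathrm{int}}$-linear symplectic similitudes of $(V,\psi)$ with multiplier in $\mathbf G_m$. The factor $\mathbf T$ acts through the scalars $K^\times$, and $\mathbf H_0$ acts by $\alpha$-similitudes. Matching the two multipliers inside $\mathbf H^\sharp$ is exactly what is needed for the combined action to preserve $\psi$ up to a rational scalar. I expect to need care in identifying this fiber-product group with the centralizer-similitude group, rather than merely with a group isogenous to it.

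Finally, $h^\sharp$ has multiplier $(z\bar z)^{-1}$, which is the scalar weight. I will then check that $h^\sharp$ induces a Hodge structure of type $\{(-1,0),(0,-1)\}$ on $V$, and that $\psi(x,h^\sharp(i)y)$ is definite. The first follows from the displayed formula for $h_{0,\tau}$: on the factor indexed by $\tau$, $h^\sharp(z)$ acts as $\eta\cdot h_{0,\tau}$ on the two $K$-embeddings, which gives only characters $z^{-1}$ and $\bar z^{-1}$. Definiteness amounts to a sign choice in $\xi$, made place by place at the real places.
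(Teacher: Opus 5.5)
\textbf{Steps 1--3 are sound, but Step 4 has a genuine gap.}

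Step~1 is fine. In Step~2 your argument is correct, but $q$ is not a ``central isogeny'': its kernel $\{(t,t)\}\simeq\mathbf T$ is a torus of dimension $[F:\mathbb Q]+1$. It is a surjection onto $\mathbf U$ with central toric kernel, which the paper makes transparent through $(t,g)\mapsto(t,t^{-1}g)$, $\mathbf H^\sharp\simeq\mathbf T\times\mathbf U$. The adjoint-group conclusion is unaffected. Deducing SV1--SV3 from the adjoint datum in Step~3 is a reasonable supplement to the paper.

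\textbf{Why Step 4 fails.} Step~4 cannot work with a single module $V=D^{\mathrm{int}}$.
\begin{itemize}
\item \emph{Wrong dimension.} The group of $D^{\mathrm{int}}$-linear similitudes of $(V,\psi)$ with multiplier in $\mathbf G_m$ is (a form of) $\mathbf H_0$. Whatever involution or $\xi$ you choose, it has dimension $[F:\mathbb Q]d^2+1$. By contrast, $\mathbf H^\sharp\simeq\mathbf T\times\mathbf U$ has dimension $[F:\mathbb Q](d^2+1)+1$.
\item \emph{Not faithful.} If $\mathbf T$ acts through the scalars $K^\times$ and $\mathbf H_0$ acts on the same $V$, then $(t,g)$ acts through $t^{\pm1}g\in\mathbf H_0$. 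The kernel contains a torus of dimension at least $[F:\mathbb Q]$. So $\mathbf H^\sharp$ is neither equal nor isogenous to the similitude group. The multiplier matching also does no work here, since $\mathbf T$ already sits in $\mathbf H_0$ as scalars.
\item \emph{Wrong Hodge types.} At a signature $(d-1,1)$ place, $\eta\cdot h_{0,\tau}$ has eigencharacters $\bar z^{-2}$ and $(z\bar z)^{-1}$ and multiplier $(z\bar z)^{-2}$, not types $\{(-1,0),(0,-1)\}$. The other choice, $\eta^{-1}h_0=h$, has weight $0$.
\end{itemize}

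\textbf{The missing idea.} You need a second summand.
\begin{itemize}
\item Take $K\oplus V$ with $\psi_{\mathbf T}\oplus\psi$, where $\psi_{\mathbf T}(x,y)=\operatorname{Tr}_{K/\mathbb Q}(\xi x\overline y)$ and $\overline\xi=-\xi$. Let $K\times B$ act componentwise.
\item A similitude commuting with the central idempotents $(1,0)$ and $(0,1)$ preserves each summand. The common multiplier then gives exactly $\mathbf T\times_{\mathbf G_m}\mathbf H_0$.
\item Now $h^\sharp=(\eta,h_0)$ acts by $\eta$ on $K$ and by $h_0$ on $V$. Hodge types and positivity are checked summand by summand.
\end{itemize}

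\textbf{Positivity on $V$.} At the places of signature $(d-1,1)$, definiteness of $\psi(x,h_0(i)y)$ is not a matter of a sign choice place by place. The paper chooses $c\in(D^{\mathrm{int}})^-\cap(D^{\mathrm{int}})^\times$ close to $c_\infty=h_0(i)$ at every real place. This single choice makes $\beta=\Int(c)\circ\alpha^{\mathrm{int}}$ a positive involution on $B$ and makes $\psi(x,h_0(i)y)$ positive definite, via Kottwitz's Lemma~4.1(3). Positivity of the involution is required for PEL data and your sketch does not address it.
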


\begin{proof}
We first justify the PEL assertion. Let $(D^{\mathrm{int}})^- = \{c\in D^{\mathrm{int}}:\alpha^{\mathrm{int}}(c)=-c\}$. At the real places, a direct computation shows that $c_\infty=h_{0}(i)$ belongs to \( ((D^{\mathrm{int}})^-\otimes_{\mathbb Q}\mathbb R) \cap  (D^{\mathrm{int}}\otimes_{\mathbb Q}\mathbb R)^\times.\) Write $\Int(c)(a)=cac^{-1}$ for inner conjugation by an invertible element $c$. The involution $\Int(c_\infty)\circ\alpha^{\mathrm{int}}$ is positive, and the regular-module construction defined by $c_\infty$ gives a polarization for which $h_0$ is positive; see \cite[Lemma~4.1(3) and the PEL construction on p.~387]{KottwitzPoints}. Both positivity conditions and invertibility remain valid under sufficiently small perturbations of $c_\infty$ within $(D^{\mathrm{int}})^-\otimes_{\mathbb Q}\mathbb R$. Since $(D^{\mathrm{int}})^-$ is a $\mathbb Q$-vector space, choose $c\in (D^{\mathrm{int}})^-\cap (D^{\mathrm{int}})^\times$ sufficiently close to $c_\infty$ at every real place. Then $\beta=\Int(c)\circ\alpha^{\mathrm{int}}$ is a positive involution.

Let $V$ be the underlying $\mathbb Q$-vector space of $D^{\mathrm{int}}$, and let $B=(D^{\mathrm{int}})^{\mathrm{op}}$ act on $V$ by right multiplication. Give $B$ the involution $\beta$, and let $\psi(x,y)=\operatorname{Tr}_{K/\mathbb Q}\operatorname{Trd}_{D^{\mathrm{int}}/K}(xc\alpha^{\mathrm{int}}(y))$. The form $\psi$ is nondegenerate and alternating. Cyclicity of the reduced trace gives $\psi(ax,y)=\psi(x,\alpha^{\mathrm{int}}(a)y)$ and $\psi(xb,y)=\psi(x,y\beta(b))$ for $a,b\in D^{\mathrm{int}}$. Thus $(V,\psi)$ is a symplectic $(B,\beta)$-module, $\operatorname{End}_B(V)=D^{\mathrm{int}}$ by left multiplication, and the involution adjoint to $\psi$ restricts to $\alpha^{\mathrm{int}}$ on $\operatorname{End}_B(V)=D^{\mathrm{int}}$. For every $\mathbb Q$-algebra $R$, the group of $B$-linear similitudes of $(V,\psi)$ is $\{a\in(D^{\mathrm{int}}\otimes_{\mathbb Q}R)^\times:\alpha^{\mathrm{int}}(a)a\in R^\times\}=\mathbf H_0(R)$. By the choice of $c$, the symmetric form $(x,y)\mapsto\psi(x,h_0(i)y)$ is positive definite on $V\otimes_{\mathbb Q}\mathbb R$. Hence $(\mathbf H_0,\mathcal X_0)$ is the PEL datum of \cite[\S5, p.~389]{KottwitzPoints}; see also \cite[Propositions~8.12 and~8.14 and Definition~8.15]{MilneShimura}.

For the torus $\mathbf T$, we view $K$ as a $\mathbb Q$-vector space with the action of itself by multiplication and its complex-conjugation involution. Choose $\xi\in K^\times$ with $\overline\xi=-\xi$ and positive imaginary part at each of the embeddings chosen to define $\eta$; such an element exists by weak approximation. Set $\psi_{\mathbf T}(x,y)=\operatorname{Tr}_{K/\mathbb Q}(\xi x\overline y)$. This form is nondegenerate and alternating, and the group of $K$-linear similitudes is $\mathbf T$, with multiplier $N_{K/F}$. At every chosen embedding, $\eta(i)$ acts by $i$, so $\psi_{\mathbf T}(x,\eta(i)x)>0$ for every nonzero $x\in K\otimes_{\mathbb Q}\mathbb R$. Thus $(\mathbf T,\mathcal X_{\mathbf T})$ is a rank-one CM PEL datum.

Now take $K\oplus V$ with alternating form $\psi_{\mathbf T}\oplus\psi$ and the componentwise action of $K\times B$, equipped with the involution $(a,b)\mapsto(\overline a,\beta(b))$. A similitude commuting with this algebra preserves the two summands, because it commutes with the two central idempotents \((1,0)\) and \((0,1)\). Its restrictions to the summands have the same multiplier, so the group of these similitudes is $\mathbf T\times_{\mathbf G_m}\mathbf H_0=\mathbf H^\sharp$. The homomorphism $h^\sharp=(\eta,h_0)$ has the required Hodge types, and the associated symmetric form is positive definite on both summands. This proves that $(\mathbf H^\sharp,\mathcal X^\sharp)$ is of PEL type. We note that this fiber product construction is also used in \cite[\S3.2, (3.3)]{RSZUnitary}.

For $(t,g)\in\mathbf H^\sharp$, equality of the multipliers gives $\nu(t^{-1}g)=1$, and every $u\in\mathbf U$ is the image of $(1,u)$. Moreover, $q\circ h^\sharp=\eta^{-1}h_0=h$. The map $(t,g)\mapsto(t,t^{-1}g)$ identifies $\mathbf H^\sharp$ with $\mathbf T\times\mathbf U$, and under this identification $\ker q$ is the first factor. The group $\mathbf U$ contains the derived group of $\mathbf H^{\mathrm{int}}$ and maps onto $(\mathbf H^{\mathrm{int}})^{\mathrm{ad}}$. Moreover, $\ker(\mathbf U\to(\mathbf H^{\mathrm{int}})^{\mathrm{ad}})=\mathbf U\cap Z(\mathbf H^{\mathrm{int}})=Z(\mathbf U)$. It follows that $(\mathbf H^\sharp)^{\mathrm{ad}}\simeq\mathbf U^{\mathrm{ad}}\simeq(\mathbf H^{\mathrm{int}})^{\mathrm{ad}}$.
\end{proof}

In particular, Varshavsky's datum is of abelian type.

To transfer the nowhere density to the PEL Shimura varieties, we first describe the maps induced by $q$ at finite level.
\begin{lemma}
\label{lem:pel-central-lift-finite}
 Let $E^\sharp$ be the reflex field of $(\mathbf H^\sharp,\mathcal X^\sharp)$. Let $EE^\sharp\hookrightarrow\Cp$ be an embedding whose restriction to $E$ is the fixed embedding $E\hookrightarrow E_v\hookrightarrow\Cp$. Let $\mathcal U$ be the product level in \eqref{eq:varshavsky-product-level}. There are arbitrarily small neat compact open subgroups $\mathcal U^\sharp\subset\mathbf H^\sharp(\Adele_f)$ with $q(\mathcal U^\sharp)\subset\mathcal U$. For every such subgroup, let $S^\sharp=\operatorname{Sh}_{\mathcal U^\sharp}(\mathbf H^\sharp,\mathcal X^\sharp)_{\Cp}$. The map $f:S^\sharp\to \mathcal{S}h$ induced by $q$ is finite \'etale, and its image is an open and closed union of geometric components.
\end{lemma}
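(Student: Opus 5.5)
We need three things: existence of arbitrarily small neat $\mathcal U^\sharp$ with $q(\mathcal U^\sharp)\subset\mathcal U$; a morphism $f$ defined over $\Cp$; and the finite étale property together with the open-and-closed image.

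\emph{Existence of the levels.} Since $q:\mathbf H^\sharp(\Adele_f)\to\mathbf H^{\mathrm{int}}(\Adele_f)$ is continuous, $q^{-1}(\mathcal U)$ is open. Intersecting it with any neat compact open subgroup of $\mathbf H^\sharp(\Adele_f)$, for instance a principal congruence subgroup of level $N\geq3$ in a faithful representation, gives arbitrarily small neat compact open $\mathcal U^\sharp$ with $q(\mathcal U^\sharp)\subset\mathcal U$. Neatness passes to subgroups.

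\emph{The morphism.} By \Cref{prop:pel-datum}, $q$ is a morphism of Shimura data. The functoriality of canonical models (Deligne; \cite{MilneShimura}) then gives a morphism $\operatorname{Sh}_{\mathcal U^\sharp}(\mathbf H^\sharp,\mathcal X^\sharp)\to\operatorname{Sh}_{\mathcal U}(\mathbf H^{\mathrm{int}},\mathcal X)$ defined over $EE^\sharp$. This map is $[x,a]\mapsto[q\circ x,q(a)]$ on complex points. Base change along the chosen embedding $EE^\sharp\hookrightarrow\Cp$ gives $f$. By the choice of this embedding, the target base change is exactly $\mathcal{S}h$.

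\emph{Finite étale and image.} The plan is to check both properties over $\mathbb C$ componentwise. They are stable under base change between algebraically closed fields of characteristic $0$, since $f$ is defined over a number field.
\begin{enumerate}[label=\textup{(\roman*)}]
\item \emph{Connected components are quotients of the same space.} A connected component of $S^\sharp_{\mathbb C}$ is $\Gamma^\sharp\backslash X^{\sharp+}$, where $\Gamma^\sharp=\mathbf H^\sharp(\mathbb Q)\cap a\,\mathcal U^\sharp a^{-1}$. The map $q$ induces an isomorphism of adjoint groups, and $q\circ h^\sharp=h$, so $q$ maps $X^{\sharp+}$ isomorphically onto a connected component $X^+$ of $\mathcal X$. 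Both are the same Hermitian symmetric domain for the common adjoint group.
\item \emph{Reduction to a group-theoretic statement.} On this component, $f$ becomes $\Gamma^\sharp\backslash X^+\to\Gamma\backslash X^+$ with $\Gamma=\mathbf H^{\mathrm{int}}(\mathbb Q)\cap q(a)\,\mathcal U\,q(a)^{-1}$. We have $q(\Gamma^\sharp)\subset\Gamma$. Both groups act freely on $X^+$ by neatness, modulo the center acting trivially. So $f$ is a covering map, and it is finite once the image of $\Gamma^\sharp$ in $\mathbf H^{\mathrm{ad}}(\mathbb Q)$ has finite index in that of $\Gamma$.
\item \emph{The main obstacle: finite index.} Both images are arithmetic subgroups of the adjoint group. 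For $\Gamma^\sharp$, use $\mathbf H^\sharp\simeq\mathbf T\times\mathbf U$ and the surjection $\mathbf U\to\mathbf U^{\mathrm{ad}}\simeq(\mathbf H^{\mathrm{int}})^{\mathrm{ad}}$, which is an isogeny on the derived part. Images of arithmetic subgroups under surjective isogenies are arithmetic (Borel), and the image of $\Gamma$ is contained in an arithmetic subgroup. Arithmetic subgroups are commensurable, which gives finite index. Projectivity of $\mathcal{S}h$, from Varshavsky, also makes $f$ proper, so quasi-finiteness suffices.
\item \emph{Open and closed image.} Each component maps onto a full component, since a covering of a connected space with nonempty image is surjective. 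Hence the image of $f$ is a union of connected components, and it is open and closed because there are finitely many components. Étaleness follows from the local isomorphism in (ii) on the analytic spaces, together with GAGA.
\end{enumerate}

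One point to handle carefully in (ii) is the center of $\mathbf H^\sharp$. $\mathbf T$ acts trivially on $X^+$, and elements of $Z(\mathbb Q)\cap\mathcal U^\sharp$ act trivially. This is harmless because neatness forces the relevant groups to act freely modulo the center.
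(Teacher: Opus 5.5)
Your route is the paper's. It uses functoriality of canonical models and a componentwise comparison $\Gamma^\sharp\backslash\mathcal X^+\to\Gamma\backslash\mathcal X^+$ over $\mathbb C$ via the adjoint isomorphism. Finite index comes from commensurability of arithmetic subgroups of the common adjoint group, freeness from neatness, and the conclusion follows by descent to $EE^\sharp$ and base change to $\Cp$.

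One step is misjustified. Projectivity of $\mathcal{S}h$ does not make $f$ proper. A morphism into a proper variety is proper only if its source is proper. For example, a non-closed open immersion into a projective variety is quasi-finite but not finite. So "quasi-finiteness suffices" does not follow from what you cite.

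The missing input is properness of the source, and it follows from what you already proved. Each source component is a finite covering of a compact target component, and there are finitely many source components. Hence $S^\sharp$ has compact complex analytification, and being quasi-projective it is projective. This is exactly how the paper gets properness of the algebraic morphism. Alternatively, the componentwise finite coverings make $f^{\mathrm{an}}$ a proper map of topological spaces, and properness of the analytification implies properness of $f$. With this repair, proper plus quasi-finite gives finite, and the rest of your argument goes through.

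A smaller point: define $\Gamma^\sharp$ and $\Gamma$ using $\mathbf H^\sharp(\mathbb Q)^+$ and $\mathbf H^{\mathrm{int}}(\mathbb Q)^+$, the stabilizers of the chosen components, as the paper does. Otherwise these groups do not act on $\mathcal X^+$.
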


\begin{proof}
Since $q$ is continuous on finite adelic points, $q^{-1}(\mathcal U)$ is an open neighborhood of the identity and contains arbitrarily small neat compact open subgroups. Fix such a subgroup $\mathcal U^\sharp$. By \cite[Theorem~5.16 and Remark~13.8]{MilneShimura}, the morphism of Shimura data induces a morphism of canonical models over $EE^\sharp$. We first prove finiteness and \'etaleness for these models and then base change to $\Cp$ through the fixed embedding $EE^\sharp\hookrightarrow\Cp$.

Choose a connected component $(\mathcal X^\sharp)^+$ of $\mathcal X^\sharp$, and let $\mathcal X^+$ be its image in $\mathcal X$. Since $q$ induces an isomorphism of adjoint Shimura data, it identifies $(\mathcal X^\sharp)^+$ with $\mathcal X^+$. Choose $a\in\mathbf H^\sharp(\Adele_f)$ representing a connected component of the source Shimura variety. The image component is represented by $q(a)$. Let $\Gamma^\sharp$ and $\Gamma$ be the images of \(\mathbf H^\sharp(\mathbb Q)^+\cap a\mathcal U^\sharp a^{-1}\) and \(\mathbf H^{\mathrm{int}}(\mathbb Q)^+\cap q(a)\mathcal Uq(a)^{-1}\) in $(\mathbf H^{\mathrm{int}})^{\mathrm{ad}}(\mathbb Q)$, respectively. Here the superscript $+$ denotes the subgroup preserving the chosen connected component of the corresponding symmetric domain. The inclusion $q(\mathcal U^\sharp)\subset\mathcal U$ gives $\Gamma^\sharp\subset\Gamma$.

By \cite[Lemma~5.13]{MilneShimura}, the map on these connected components is
\[
 \Gamma^\sharp\backslash\mathcal X^+
 \longrightarrow
 \Gamma\backslash\mathcal X^+,
 \qquad [x]_{\Gamma^\sharp}\longmapsto[x]_{\Gamma}.
\]
By \cite[Proposition~3.2 and the discussion following Lemma~5.13]{MilneShimura}, both groups are arithmetic subgroups of the same adjoint group. They are therefore commensurable, so $[\Gamma:\Gamma^\sharp]<\infty$. Neatness makes their actions on $\mathcal X^+$ free. The displayed map is consequently a finite holomorphic covering so in particular a local biholomorphism.

Every target component is compact because the target Shimura variety is projective. Each source component is a finite covering of a target component, so it is compact as well. There are only finitely many source components by \cite[Lemma~5.12]{MilneShimura}. The source Shimura variety thus has compact complex analytification. Since it is quasi-projective, it is projective. The algebraic morphism is therefore proper, and the componentwise description shows that it has finite fibers. Hence it is finite. The same description shows that it is \'etale, since its analytification is a local biholomorphism.

Finiteness and \'etaleness descend from $\mathbb C$ to $EE^\sharp$ and are preserved by base change to $\Cp$. Thus $f:S^\sharp\to\mathcal{S}h$ is finite \'etale. A finite morphism is closed and an \'etale morphism is open, so its image is an open and closed union of geometric components.
\end{proof}

\begin{proof}[Proof of \Cref{cor:pel-intro}]
Let $f:S^\sharp\to\mathcal{S}h$ be the map in \Cref{lem:pel-central-lift-finite}, and let $Z_0=(f(Z^\sharp))_{\mathrm{red}}$. Since $f$ is finite, $Z_0$ is closed and $\dim Z_0=\dim Z^\sharp$. The two ambient Shimura varieties have the same dimension because their adjoint Shimura data are isomorphic. Hence $\dim Z_0<\dim\mathcal{S}h$. By \Cref{thm:varshavsky-main}, the $p$-adic closure of $\HO(Z_0)$ is nowhere dense in $\mathcal{S}h(\Cp)$.

Fix $a\in\mathbf H^\sharp(\Adele_f)$, and put $\mathcal U^\sharp_a=\mathcal U^\sharp\cap a\mathcal U^\sharp a^{-1}$ and $\mathcal U_{q(a)}=\mathcal U\cap q(a)\mathcal Uq(a)^{-1}$. Define
\[
 S^\sharp_a=\operatorname{Sh}_{\mathcal U^\sharp_a} (\mathbf H^\sharp,\mathcal X^\sharp)_{\Cp},\quad S_{q(a)}=\operatorname{Sh}_{\mathcal U_{q(a)}}(\mathbf H^{\mathrm{int}},\mathcal X)_{\Cp}.
\]
Write $s^\sharp_a,t^\sharp_a:S^\sharp_a\to S^\sharp$ and $s_{q(a)},t_{q(a)}:S_{q(a)}\to\mathcal{S}h$ for the corresponding Hecke maps. The maps \(s^\sharp_a\) and \(s_{q(a)}\) forget part of the level structure, and the maps \(t^\sharp_a\) and \(t_{q(a)}\) are induced by right multiplication by $a$ and $q(a)$, respectively.

The inclusion $q(\mathcal U^\sharp)\subset\mathcal U$ implies $q(\mathcal U^\sharp_a)\subset\mathcal U_{q(a)}$. Thus $q$ induces a morphism $f_a: S^\sharp_a \to S_{q(a)}$. Functoriality gives
\[
 s_{q(a)}\circ f_a=f\circ s^\sharp_a,
 \quad
 t_{q(a)}\circ f_a=f\circ t^\sharp_a.
\]
If $y\in S^\sharp_a(\Cp)$ satisfies $s^\sharp_a(y)\in Z^\sharp(\Cp)$, then $s_{q(a)}(f_a(y))=f(s^\sharp_a(y))\in Z_0(\Cp)$. It follows that $f(t^\sharp_a(y))=t_{q(a)}(f_a(y))\in\HO(Z_0)$. Taking the union over all $a\in\mathbf H^\sharp(\Adele_f)$ gives $f(\HO(Z^\sharp))\subseteq\HO(Z_0)$.

 By continuity of $f$, the set $f^{-1}(\overline{\HO(Z_0)}^{\,\mathrm{cl}})$ is closed and contains $\HO(Z^\sharp)$. Therefore
\[
 \overline{\HO(Z^\sharp)}^{\,\mathrm{cl}}
 \subseteq f^{-1} (\overline{\HO(Z_0)}^{\,\mathrm{cl}}).
\]
Since $f$ is \'etale, its map on $\Cp$-points is locally an analytic isomorphism and is therefore open. If $f^{-1}(\overline{\HO(Z_0)}^{\,\mathrm{cl}})$ contained a nonempty open subset $U\subset S^\sharp(\Cp)$, then $f(U)$ would be a nonempty open subset of $\mathcal{S}h(\Cp)$ contained in $\overline{\HO(Z_0)}^{\,\mathrm{cl}}$, contradicting its nowhere density. Thus \(f^{-1} (\overline{\HO(Z_0)}^{\,\mathrm{cl}})\) is nowhere dense, and so is $\overline{\HO(Z^\sharp)}^{\,\mathrm{cl}}$.
\end{proof}

\section{Comparison and remarks on the general case}
\label{sec:scope}

\subsection{Comparison with Maulik--Poonen}
\label{subsec:maulik-poonen}
In this subsection, we assume that the Shimura variety has a smooth proper integral model over which the associated abelian scheme extends. Let $L/\Qp$ be finite, and let $\mathscr S$ be an irreducible smooth proper scheme over $\mathcal O_L$ carrying an abelian scheme $\mathcal A\to\mathscr S$. Let $S=\mathscr S_L$, write $\mathcal A_L=\mathcal A\times_{\mathcal O_L}L$, and let $\bar\eta$ be a geometric generic point of $S$. For $s\in S(\Cp)$, write $\mathcal A_s$ for the fiber of $\mathcal A_L$ over $s$ which is an abelian variety over $\Cp$. Properness identifies $\mathscr S(\mathcal O_{\Cp})$ with $S(\Cp)$. Under this identification, Maulik--Poonen prove that the set of points where the specialization map $\operatorname{End}(\mathcal A_{\bar\eta})\to\operatorname{End}(\mathcal A_s)$ is not an isomorphism is nowhere dense in the $p$-adic topology \cite[Setup~1.6 and Proposition~1.13]{MaulikPoonen}.

For the case where the subvariety $Z$ is a point, suppose in addition that $S$ is a geometrically connected, positive-dimensional component of a Shimura variety of Hodge type. Fix a Hodge embedding and a compatible neat Siegel level. Write $\pi^{\mathrm{univ}}:\mathfrak A_g\to\mathcal A_g$ for the universal abelian scheme over the corresponding Siegel moduli space, omitting the fixed level from the notation. Let $\iota:S\to\mathcal A_{g,L}$ be the induced morphism, and assume that $\mathcal A_L\simeq\iota^*\mathfrak A_{g,L}$. Let $\mathscr T=\mathscr S\times_{\mathcal O_L}\mathscr S$, and consider the abelian scheme $\operatorname{pr}_1^*\mathcal A\times_{\mathscr T} \operatorname{pr}_2^*\mathcal A$ over $\mathscr T$, where $\operatorname{pr}_1,\operatorname{pr}_2:\mathscr T\to\mathscr S$ are the two projections. The scheme $\mathscr T$ is irreducible, since it is smooth over $\mathcal O_L$ and its generic fiber is geometrically integral.

The two factors of the geometric generic fiber of this product family are not isogenous. Indeed, an isogeny between them would extend over a finite cover of a nonempty open subset of $S_{\overline L}\times_{\overline L}S_{\overline L}$. Choose an $\overline L$-point in the first projection of this open subset. Its fiber under the first projection is a nonempty open subset of $S_{\overline L}$, and the finite cover restricts to a finite surjective cover of this fiber. On this cover, the first abelian scheme is constant, so the geometric generic fiber of the second, which is the pullback of $\mathcal A_{\overline L}$, is isogenous to a constant abelian variety. By \cite[Remark~1.16]{MaulikPoonen}, the abelian scheme $\mathcal A_{\overline L}\to S_{\overline L}$ would then become constant after a finite \'etale base change. This contradicts the positive-dimensional image of $S$ under the chosen morphism to the Siegel moduli space.

Let $N\subset S(\Cp)\times S(\Cp)$ be the endomorphism jumping locus of the product family, relative to its geometric generic fiber over $\mathscr T$. By \cite[Setup~1.6 and Proposition~1.13]{MaulikPoonen} and properness of $\mathscr T$, the set $N$ is nowhere dense in the $p$-adic topology. Every pair $(s,t)$ for which $\mathcal A_s$ and $\mathcal A_t$ are isogenous belongs to $N$. To see this, suppose that the specialization map on endomorphism rings were an isomorphism at $(s,t)$. After tensoring with $\mathbb Q$, an isogeny between $\mathcal A_s$ and $\mathcal A_t$ and its inverse in $\operatorname{Hom}\otimes\mathbb Q$ would lift to rational endomorphisms of the geometric generic product. Specialization respects the projections onto the two factors. Its injectivity therefore shows that the lifted maps give mutually inverse rational homomorphisms between the generic factors, contradiction.

Now fix any $x\in S(\Cp)$, and let $I_x$ be the set of $s\in S(\Cp)$ for which $\mathcal A_s$ is isogenous to $\mathcal A_x$. Then $I_x\times I_x\subseteq N$. If the $p$-adic closure of $I_x$ contained a nonempty open subset $U$, the set $(I_x\cap U)\times(I_x\cap U)$ would be dense in $U\times U$. The $p$-adic closure of $N$ would therefore contain $U\times U$, contradiction. Thus the $p$-adic closure of $I_x$ is nowhere dense.

Hecke correspondences induce quasi-isogenies, so every Hecke translate of $x$ lying in $S$ belongs to $I_x$. Suppose that over a common finite extension of $L$, every geometric component of the fixed-level Shimura variety admits a smooth proper model carrying the induced abelian scheme. On each component, any two points of the Hecke orbit again have isogenous fibers, so the same product argument applies to the intersection of the orbit with that component. Since there are finitely many components, the $p$-adic closure of the full finite adelic Hecke orbit of $x$ is nowhere dense. We thank Ananth Shankar for explaining this consequence of Maulik--Poonen.

There is a related argument for positive-dimensional subvarieties with generically extra endomorphisms. Let $Z\subset S_{\Cp}$ be a closed integral subvariety, and let $\bar\eta_Z$ be a geometric generic point of $Z$. Write $\mathcal A_{\bar\eta_Z}$ for the fiber of $(\mathcal A_L)_{\Cp}$ at $\bar\eta_Z$. Suppose that $\dim_{\mathbb Q}\operatorname{End}^0(\mathcal A_{\bar\eta_Z})>\dim_{\mathbb Q}\operatorname{End}^0(\mathcal A_{\bar\eta})$. Choose a $\mathbb Q$-basis of this endomorphism algebra. After a finite extension of $\Cp(Z)$, all the chosen basis elements are defined. After multiplying them by nonzero integers they are endomorphisms. Let $Z'$ be the normalization of $Z$ in this extension. The map $Z'\to Z$ is finite and surjective, and every point of $Z(\Cp)$ lifts to a point of $Z'(\Cp)$. Pull back $(\mathcal A_L)_{\Cp}$ along $Z'\to Z\hookrightarrow S_{\Cp}$. The chosen endomorphisms of its generic fiber extend to this abelian scheme over $Z'$ by \cite[Chapter~I, Proposition~2.7]{FaltingsChai}. Their specializations on every geometric fiber remain $\mathbb Q$-linearly independent. Indeed, after clearing denominators, a relation on one fiber would give an endomorphism of the abelian scheme over $Z'$ which vanishes on that fiber. Rigidity \cite[Chapter~6, Proposition~6.1(3)]{MumfordFogartyKirwan} would then make it vanish identically, contrary to linear independence on the geometric generic fiber. Thus every point of $Z(\Cp)$ lies in the Maulik--Poonen jumping locus.
A Hecke correspondence induces a quasi-isogeny between the corresponding abelian varieties and therefore preserves the dimension of their rational endomorphism algebras. Thus every point of $\HO(Z)\cap S(\Cp)$ has endomorphism dimension greater than $\dim_{\mathbb Q}\operatorname{End}^0(\mathcal A_{\bar\eta})$ and belongs to the Maulik--Poonen jumping locus. Its $p$-adic closure is therefore nowhere dense in $S(\Cp)$. If every geometric component of the fixed-level Shimura variety admits a smooth proper integral model carrying the induced abelian scheme, and the geometric generic fiber on each component has endomorphism dimension $\dim_{\mathbb Q}\operatorname{End}^0(\mathcal A_{\bar\eta})$, the same argument applies on every component. Since there are finitely many components, the $p$-adic closure of the full finite adelic Hecke orbit of $Z$ is nowhere dense in the entire Shimura variety.

In the good reduction setting above, our application of Maulik--Poonen's result to positive-dimensional $Z$ requires $\dim_{\mathbb Q}\operatorname{End}^0(\mathcal A_{\bar\eta_Z})>\dim_{\mathbb Q}\operatorname{End}^0(\mathcal A_{\bar\eta})$. In contrast, \Cref{thm:varshavsky-main} applies to every closed reduced subvariety of positive codimension under Varshavsky's global uniformization hypotheses. Also, Ananth Shankar and Philip Engel have informed the author that they have a proof of the conjecture when $Z$ is a Shimura subvariety based on Maulik--Poonen's method.

\subsection{General Shimura varieties}
\label{subsec:scope-criterion}

For a Hodge type Shimura datum whose group is unramified at an odd prime $p$, Kim's uniformization describes the formal neighborhood associated with a fixed isogeny class $\phi$ in the special fiber \cite[Theorem~4.7]{KimUniformization}. Its rigid generic fiber is a tube or union of tubes, which we denote by $U_\phi$ \cite[\S5.3 and Theorem~5.4]{KimUniformization}. 

\medskip
\noindent\textbf{Finite quasi-compact lifts.}
 Even when $Z$ is proper over $\Cp$, the intersection $Z^{\an}\cap U_\phi$ need not be quasi-compact. Properness of $Z$ alone therefore does not allow us to apply \Cref{lem:finite-qc-lifts} to this intersection.

Finite quasi-compact lifts can be replaced by the uniform reduction condition in \Cref{prop:uniform-period-reduction}. This condition must control the periods of all lifts and their $J$-translates simultaneously. Kim's uniformization does not establish this condition for lifts of an arbitrary algebraic $Z$.

\medskip
\noindent\textbf{Local actions and period maps.}
Properness of the $J$-action on $\mathcal D^{\Ber}$ is used in the proof of the Drinfeld theorem \Cref{thm:local-drinfeld}; it is not an additional hypothesis of the general criterion once the required local orbit-closure statement is known. The descent in \Cref{lem:pullback-quotient} only requires the period maps to be analytic and open on $\Cp$-points, not finite. Kim constructs a $J_b(\Qp)$-equivariant étale period map to the associated flag variety \cite[\S7.5, Proposition~7.5.2 and Remark~7.5.5]{KimRZSpaces}, where $J_b(\Qp)$ is the group of tensor preserving self quasi-isogenies of the framing $p$-divisible group. After base change to $\Cp$, this map is open on $\Cp$-points. To apply \Cref{thm:global-criterion}, one still needs a local nowhere density theorem for the relevant orbit closures on this flag variety, and étaleness itself does not give such a theorem.

\medskip
\noindent\textbf{Full adelic compatibility.}
Both Kim's and Varshavsky's uniformizations are equivariant for the full finite adelic action on their respective towers \cite[Theorem~5.4]{KimUniformization}. A Hecke operation at $p$ changes the $p$-divisible group and its framing quasi-isogeny compatibly, so the rational Hodge filtration transported to the framing isocrystal is unchanged \cite[\S\S7.4--7.5]{KimRZSpaces}. Hecke operations away from $p$ change the adelic coordinate, while a change of representative in the arithmetic quotient changes the period through $J_b(\Qp)$. These facts give the pointwise period containment within $U_\phi$. The Hecke correspondences preserve $U_\phi$ at fixed level, while the full adelic group acts on the tower.

The remaining difficulty is to place all these periods in one closed nowhere dense set, using either finite quasi-compact lifts or an additional condition such as the uniform reduction condition above. Moreover, an arbitrary $Z$ may meet tubes associated with different isogeny classes. Proving nowhere density on each such tube does not by itself control the closure of their union.

\medskip
If one uses a compactification to treat a subvariety $Z$ that is not proper over $\Cp$, additional questions arise concerning the extension and biextension data at the semiabelian boundary \cite[Chapter~II, \S\S1--2, and Chapter~III]{FaltingsChai}. Consequently, the argument given here does not directly prove the conjecture for an arbitrary PEL or Hodge type Shimura variety.

\end{document}